\DeclareMathOperator*{\argmin}{arg\,min}
\newcommand{\ubar}[1]{\underaccent{\bar}{#1}}
\newcommand{\red}[1]{\textcolor{black}{#1}}
\newenvironment{MSC2000}
    {\par\noindent\textbf{Mathematics Subject Classification (2020):} }
    {\par}
\newtheorem{assumption}{Assumption}
\DeclareMathOperator{\sign }{sign}
\begin{document}

% Series %Vol. %Iss. % Year
\info{A}{72}{1}{2010}
% to be added afterwards
%\setcounter{page}{11}

\title{Convergence analysis of linearized  $\ell_q$ penalty methods  for nonconvex  optimization with nonlinear equality constraints}

\author{Lahcen El Bourkhissi}
\address{$^1$ Department of Automatic Control and Systems Engineering, National University of Science and Technology  Politehnica Bucharest, 
             e-mail: {\tt lel@stud.acs.upb.ro}}
             
\author{Ion Necoara}
\address{$^2$    Department of Automatic Control and Systems Engineering, National University of Science and Technology  Politehnica Bucharest, 060042 Bucharest and Gheorghe Mihoc-Caius Iacob Institute of Mathematical Statistics and Applied Mathematics of the Romanian Academy, 050711 Bucharest, Romania.   e-mail: {\tt ion.necoara@upb.ro}}

\date{Received: date / Accepted: date}

\pagestyle{headings}
\maketitle

%%%%%% Abstract
\begin{abstract}
%{\it \textbf{Abstract in Romanian:}}

%\mm \mm

{\it  \textbf{Abstract:}}
%\begin{abstract}
In this paper, we consider nonconvex optimization problems with nonlinear equality constraints. We assume that the objective function and the functional constraints are locally smooth. To solve this problem, we introduce a linearized $\ell_q$ penalty based method, where  $q \in (1,2]$ is the parameter defining the norm used in the construction of the penalty function. \red{Our method} involves linearizing the objective function and functional constraints in a Gauss-Newton fashion  at the current iteration in the penalty formulation and \red{introduces} a quadratic regularization. This approach yields an easily solvable subproblem, whose solution becomes the next iterate. \red{By using a novel dynamic rule for the choice of the regularization parameter, we establish that the iterates of our method converge to an $\epsilon$-first-order solution in $\mathcal{O}(1/{\epsilon^{2+ (q-1)/q}})$  outer iterations}. Finally, we put theory into practice and evaluate the performance of the proposed algorithm by making numerical comparisons with existing methods  from literature.
\end{abstract}

%%%%% Keywords
\begin{Keywords}
Nonconvex optimization, nonlinear functional constraints, $\ell_q$ penalty, linearized $\ell_q$ penalty, convergence analysis.
\end{Keywords}

%%%%%%% 2000 Mathematics Subject Clasification:
\begin{MSC2000}
 68Q25 .  90C06 . 90C30.
\end{MSC2000}

%%%%%%%%%%%%%%%%%

\section{Introduction}
\label{intro}
\noindent In various fields, such as machine learning, matrix optimization, statistics, control and signal processing, a wide spectrum of applications can be reformulated as nonconvex optimization problems involving nonlinear functional equality constraints. References \cite{ LukSab:19,  HonHaj:17} serve as examples of this tendency. In our work, we propose an algorithmic framework  to solve such optimization problems based on  a penalty approach.

\vspace{0.2cm}

\noindent \textit{Related work:}
Penalty methods have played a central role in theoretical and numerical optimization, with their historical origins dating back to at least \cite{Cou:43}. Extensive research has explored the applications of penalty methods to a wide range of problems, as shown by works such as \cite{PolTre:73, Ber:76, KonMel:18, IzmSol:23, NocWri:06, Fle:87,  CarGou:11, LinMa:22}, among others. For example, \cite{IzmSol:23} studies a polyvalent class of penalty functions, taking the form of $\|\cdot\|_q^q$, where $q > 0$, for general constrained problems. This study establishes bounds that measure the closeness of the penalty solution to the solution of original problem as a function of the penalty parameter $\rho$. In particular,  under strict Mangasarian–Fromovitz constraint qualification and second-order sufficiency, a bound of the form $\mathcal{O}(\frac{1}{\rho^{q-1}})$ is derived and it becomes zero for $q\in(0,1]$ provided that $\rho$ is sufficiently large. Paper  \cite{CarGou:11} introduces an  algorithm based on  a Lipschitz penalty function, with dynamic quadratic regularization. This method reduces the size of a first-order criticality measure to a specified accuracy threshold $\epsilon$, in a maximum of $\mathcal{O}(1/\epsilon^2)$ functions evaluations, provided we are close to feasibility. In an alternative context, \cite{KonMel:18} focuses on the use of a quadratic penalty method to handle nonconvex composite problems with linear constraints proving convergence to an  $\epsilon$-critical  point in  $\mathcal{O}(1/\epsilon^3)$  accelerated composite gradient  steps. Furthermore, the work in \cite{LinMa:22} introduces an inexact proximal-point penalty method for solving general problems with nonconvex objective  and constraints, proving convergence to an $\epsilon$-critical point  within $\mathcal{O}(1/\epsilon^3)$ of functions evaluations. The result can be refined, reaching a  complexity of $\mathcal{O}(1/\epsilon^{2.5})$ for nonconvex objective and  convex constraints.  Finally, in our previous work \cite{ElbNec:25}, we developed a quadratic penalty method for solving smooth nonconvex optimization problems with nonlinear constraints, where we linearize both the objective and functional constraints within the quadratic penalty function in a Gauss-Newton fashion. We established a complexity bound of 
$\mathcal{O}(1/\epsilon^{2.5})$ of functions evaluations. In this context, our current method is inspired by \cite{IzmSol:23} and generalizes our previous work \cite{ElbNec:25}, as it considers an 
$\ell_q$ penalty approach with $q \in (1,2]$, bridging the gap between two extremes: the exact penalty method based on $\ell_1$ norm, where the penalty parameter $\rho$ is finite but the subproblem lacks differentiability, and the quadratic penalty method, where the subproblem is smooth but the penalty parameter \red{$\rho$} must be of the order inverse of the desired accuracy. Our method is intrinsically general and adapts to values of $q$, i.e.,  when $q=2$  our approach aligns with the quadratic penalty method \cite{ElbNec:25}, while when $q$ approaches $1$ we get closer to a Lipschitz penalty method (exact penalty), while retaining  differentiability of the subproblem.
 
%\noindent Another interesting strategy for dealing with nonlinear constraints involves the augmented Lagrangian method, a method that can be seen as an extension of the quadratic penalty approach. This method has attracted considerable attention in the scientific literature, with numerous studies devoted to its application \cite{BerTsi:03, BirMar:20, BoyPar:11, XieWri:21, CohHal:21, Yas:22}. To illustrate, consider the recent work in \cite{XieWri:21}, which is focused on the proximal augmented Lagrangian method designed to tackle nonconvex but smooth optimization problems. This approach establishes that when an approximate first- or second-order solution of the subproblem is found, asymptotically approaching zero error, it has the potential to produce a first- or second-order $\epsilon$ solution of the original problem. This result can be obtained in a number of outer iterations of the order of $\mathcal{O}(1/\epsilon^{2-\eta})$, where $\eta \in [0,2]$ is a user-defined parameter. These results underline the method's effectiveness in tackling complex optimization challenges. However, despite the emergence of the augmented Lagrangian method, differentiable penalty approaches retain their relevance, particularly in the context of estimating Lagrange multipliers for augmented Lagrangian-based methods. Consequently, a thorough analysis of general differentiable penalty methods remains of paramount importance.\\

\vspace{0.2cm}

\noindent \textit{Contributions:} Our approach, referred to as the linearized $\ell_q$ penalty method (qLP), effectively addresses  some of the  limitations of the previous  studies. Notably, in  \cite{CarGou:11}, \red{the subproblem is non-differentiable, due to the use of a Lipschitz penalty function,} while in \cite{KonMel:18}, the framework is limited to handling \red{only} linear constraints. \red{Hence, our main contributions are as follows}:

(i) At each iteration, we linearize, in a Gauss-Newton fashion, both the cost function and the nonlinear functional constraints within the $\ell_q$ penalty function, {where  $q \in (1,2]$ is the parameter defining the norm used in the construction of the penalty function,} and add a dynamic regularization term. This results in a new algorithm, called \textit{the linearized $\ell_q$ penalty method} (qLP). Notably, our method considerably simplifies the computational cost of the new iterate, since each iteration reduces to {minimizing a strongly convex differentiable function with Holder continuous gradient, making the subproblem easily solvable with e.g., an accelerated first-order scheme.}

(ii) We provide rigorous proofs of global asymptotic convergence, guaranteeing that the iterates eventually converge to a critical point of the $\ell_q$ penalty function, which implies, for an appropriate choice of $\rho$, a $(0,\epsilon)$-first-order solution of the original problem. Furthermore, our method guarantees convergence to an $\epsilon$-first-order solution of the original problem in $\mathcal{O}(1/\epsilon^{2+ (q-1)/q})$ outer iterations, thus improving the existing bounds.

(iii)  Compared to \cite{CarGou:11}, which employs a Lipschitz penalty function and has a total complexity of order \(\mathcal{O}\left(\epsilon^{-3}\right)\)  {when employing smoothing and an accelerated gradient scheme for solving the convex nonsmooth subproblem,} our approach exhibits a {better} total  complexity of order  $\mathcal{O}\left(1/\epsilon^{2+((q-1)/q)+(1/(3q-2))}\right)$
{when we \red{use} an accelerated gradient method for solving the corresponding  (strongly) convex subproblem with Holder gradient}, despite the complexity in evaluating Jacobians  being slightly more favorable for \cite{CarGou:11}. Still comparing the complexity of the subproblems, the algorithm in \cite{LinMa:22} is  difficult to implement in practice due to its high nonconvexity caused by the presence of nonlinear constraints in the subproblem from each iteration. Moreover, unlike \cite{KonMel:18}, our qLP method can handle general nonlinear equality constraints.  

\vspace{0.2cm}

\noindent The paper is organized as follows: In section \ref{sec2}, we begin by presenting the problem we \red{are} focusing on, as well as the essential concepts needed for our analysis. Then, in section \ref{sec3}, we present our algorithm, while Section \ref{sec4} is devoted to analyzing its convergence. Finally, Section \ref{sec5} is dedicated to a numerical comparison of our method with existing algorithms.

%%%%%%%%%%%%%%%%%%%%%%%%%%%%%%%%%%%

\section{Problem formulation and  preliminaries}
\label{sec2}
\noindent In this paper, we consider the following nonconvex optimization problem:
\begin{equation}
\begin{aligned}\label{eq1}
 \underset{x\in\mathbb{R}^n}{\min}
 f(x) \quad
 \textrm{s.t.}
 \quad F(x)=0,
\end{aligned}
\end{equation}
where $f:\mathbb{R}^{n}\to {\mathbb{R}}$ and  $F(x)\triangleq{(f_1(x),...,f_m(x))}^T$, with $f_i:\mathbb{R}^{n}\to {\mathbb{R}}$ for all $i=1:m$. We assume that the functions  $f, f_i \in \mathcal{C}^1$ for all  $i=1:m$,  where $f$  can be nonconvex  and $F$  nonlinear. {Moreover, we assume that the problem is well-posed i.e., the feasible set is nonempty and the optimal value is finite. Before introducing the main assumptions for our analysis, we would like to clarify some notations.  We use $\|\cdot\|_q^q$, where $q\in(1,2]$, to denote  the  $q$-norm of a vector in $\mathbb{R}^{n}$. For simplicity, $\|\cdot\|$ denotes  the Euclidean norm of a vector or the spectral norm of  a matrix.  For a differentiable function \( f: \mathbb{R}^n \to \mathbb{R} \), we denote by \( \nabla f(x) \in \mathbb{R}^n \) its gradient at a point \( x \). Moreover, we say that \( x^* \) is a \textit{critical} point of \( f \) if  \( \nabla f(x^*) = 0 \).
 For a differentiable vector function $F:\mathbb{R}^n  \to\mathbb{R}^m$, we denote its Jacobian at a given point $x$ by ${J}_F(x)\in\mathbb{R}^{m\times n}$. Furthermore, for a vector $y=(y_1,\ldots,y_m)^T\in\mathbb{R}^m$ and a positive value  $a$, we denote $|y|^a = (|y_1|^a,\ldots, |y_m|^a)^T$  and $\sign(y)\circ|y|^{a} = ( \sign(y_1) |y_1|^{a}, \cdots, \sign(y_m) |y_m|^{a} )^T \in\mathbb{R}^m$.  We further introduce the   notations:
 \[
 l_f(x;\bar{x})\triangleq f(\bar{x})+\langle\nabla f(\bar{x}),x-\bar{x}\rangle, \;\; l_F(x;\bar{x})\triangleq F(\bar{x})+ J_F(\bar{x})(x-\bar{x}) \hspace{0.5cm}\forall x,\bar{x}. 
 \]
Let us now present the main assumptions considered  for problem \eqref{eq1}:}
\begin{assumption}\label{assump1}
Assume that $f(x)$ has  compact level sets, i.e., for any $\alpha\in\mathbb{R}$, the following set is either empty or compact:
\[
\mathcal{S}_{\alpha}^0\triangleq{\{x:\, f(x)\leq\alpha\}}.
\]
\end{assumption}
\begin{assumption}\label{assump2}
Given a compact set $\mathcal{S}\subseteq\mathbb{R}^n$, there exist positive constants $M_f, M_F, \sigma, L_f, L_F$ such that $f$ and $F$ satisfy the following conditions:
\begin{enumerate}[(i)]
  \item $ \|\nabla f(x)\|\leq M_f, \hspace{0.5cm}\|\nabla f(x)-\nabla f(y)\|\leq L_f\|x-y\| \quad \red{\forall} x,y \in \mathcal{S}$.\label{ass1}
  \item $  \|J_F(x)\|\leq M_F, \hspace{0.5cm}\|J_F(x)-J_F(y)\|\leq L_F\|x-y\| \quad \forall x, y\in\mathcal{S}$. \label{ass2}
  \item Problem  \eqref{eq1} satisfies Linear Independance Constraint Qualification (LICQ) condition  \text{ for all } $x\in\mathcal{S}$\label{ass3}.
\end{enumerate}
%{where $\nabla f\in\mathbb{R}^n$ and $J_F\in\mathbb{R}^{m\times n}$ are the gradient of the objective function and the Jacobian matrix of the functional constraints, respectively.}
\end{assumption}

\begin{assumption}\label{assump3}
There exists finite $\bar{\alpha}$ such that $f(x)\leq\bar{\alpha}$ for all $x\in\{x:\, \|F(x)\|\leq1\}$.
\end{assumption}
\noindent Note that these assumptions are standard in the nonconvex optimization literature, in particular in penalty type methods,  see e.g., \cite{CarGou:11,XieWri:21,CohHal:21,ElbNec:25}. In fact, these assumptions are not restrictive because they need to hold only  locally. Indeed,  large classes of problems satisfy these assumptions as discussed below.

\begin{remark}
Assumption \ref{assump1} holds e.g., when $f(x)$ is coercive ({in particular,} $f(x)$ is strongly convex),  or $f(x)$ is bounded {from} bellow. 
\end{remark}

\begin{remark}
Assumption \ref{assump2} allows general classes of problems. In particular, conditions \textit{(\ref{ass1})} hold if $f(x)$ is differentiable and $\nabla f(x)$ is \textit{locally}  Lipschitz continuous on a neighborhood of $\mathcal{S}$. Conditions \textit{(\ref{ass2})} hold when $F(x)$ is differentiable on a neighborhood of $\mathcal{S}$ and $J_F(x)$ is \textit{locally} Lipschitz continuous on $\mathcal{S}$. Finally, the LICQ assumption guarantees the existence of dual multipliers and is commonly used in nonconvex optimization, see e.g.,  \cite{NocWri:06, XieWri:21}. \red{For} equality constraints, LICQ holds on a compact set \( \mathcal{S} \) if the smallest singular value of the Jacobian matrix of the functional constraints remains strictly positive~on~\( \mathcal{S} \).
\end{remark}

\begin{remark}
For  Assumption \ref{assump3} to hold, it is sufficient the set $\{x: \,\|F(x)\|\leq 1\}$ to be compact. In fact, we do not need this assumption if we can choose the starting point $x_0$ such that $F(x_0)=0$, that is, the initial point is feasible.
\end{remark}

\noindent The following lemma is an immediate consequence of Assumption \ref{assump1}.
\begin{lemma} If  Assumption \ref{assump1} holds, then for any $\rho\geq 0$, we have:
\begin{equation}\label{lem1}
 \ubar{P}\triangleq{\inf_{x\in\mathbb{R}^n}\{ f(x)+\frac{\rho}{q}\|F(x)\|_q^q\}}>-\infty \quad \text{ and } \quad     \bar{f}\triangleq{\inf_{x\in\mathbb{R}^n}\{ f(x) \}}>-\infty.   
\end{equation}
\end{lemma}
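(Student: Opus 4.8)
The plan is to derive both finiteness claims from the single fact that $f$ is bounded below on $\mathbb{R}^n$, which is itself a consequence of Assumption~\ref{assump1}. Once $\bar f>-\infty$ is established, the bound on $\ubar P$ is immediate, since the added penalty term is nonnegative for every admissible $\rho$ and $q$.

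First I would prove $\bar f>-\infty$. Since $f\in\mathcal{C}^1$ it is continuous. Pick any point $x_0\in\mathbb{R}^n$ and set $\alpha\triangleq f(x_0)$; then $x_0\in\mathcal{S}_\alpha^0$, so the level set $\mathcal{S}_\alpha^0$ is nonempty and hence, by Assumption~\ref{assump1}, compact. By the Weierstrass theorem, the continuous function $f$ attains its minimum over this compact set at some $x^*\in\mathcal{S}_\alpha^0$, so that $f(x^*)\le f(x)$ for all $x\in\mathcal{S}_\alpha^0$ and, in particular, $f(x^*)\le\alpha$. I would then upgrade $f(x^*)$ to a global lower bound by a two-case split: if $f(x)\le\alpha$ then $x\in\mathcal{S}_\alpha^0$ and $f(x)\ge f(x^*)$; if $f(x)>\alpha$ then $f(x)>\alpha\ge f(x^*)$. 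In either case $f(x)\ge f(x^*)$ for every $x\in\mathbb{R}^n$, whence $\bar f=\inf_{x}f(x)=f(x^*)>-\infty$.

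For the remaining claim, I would simply observe that for $\rho\ge 0$ and $q\in(1,2]$ the penalty contribution satisfies $\frac{\rho}{q}\|F(x)\|_q^q\ge 0$ for all $x$, so that $f(x)+\frac{\rho}{q}\|F(x)\|_q^q\ge f(x)\ge\bar f$. Taking the infimum over $x\in\mathbb{R}^n$ yields $\ubar P\ge\bar f>-\infty$, completing the proof. I do not anticipate a genuine obstacle here: the only substantive ingredient is the passage from compactness of a nonempty level set to a uniform lower bound on $f$, and the penalty bound then follows from nonnegativity of the $q$-norm term. The sole point requiring a little care is making the global lower bound argument watertight, namely ensuring that points outside the chosen level set cannot undercut the minimum attained inside it, which is exactly what the two-case split above guarantees.
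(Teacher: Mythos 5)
Your proof is correct and follows exactly the argument the paper intends (the paper states this lemma as ``an immediate consequence of Assumption \ref{assump1}'' and omits the proof): compactness of a nonempty level set plus Weierstrass yields $\bar f>-\infty$, and nonnegativity of $\frac{\rho}{q}\|F(x)\|_q^q$ for $\rho\ge 0$ gives $\ubar{P}\ge\bar f$. The two-case split you use to globalize the lower bound is the right way to make the ``immediate'' step watertight.
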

\begin{comment}
 \begin{proof}
See Appendix.
\end{proof}   
\end{comment}

\noindent We are interested in (approximate) first-order (also called KKT) solutions of \red{optimization problem} \eqref{eq1}.  Hence,   let us  introduce the following definitions: 
\begin{definition}\label{firstorder}[First-order solution  and $\epsilon$-first-order solution of \eqref{eq1}]
The vector $x^*$ is said to be a first-order solution of \eqref{eq1} if  $\exists\lambda^*\in\mathbb{R}^m$ such that: 
\begin{equation*}
\nabla f(x^*)+J_F(x^*)^T\lambda^*=0\hspace{0.3cm} \emph{and} \hspace{0.3cm} F(x^*)=0.
\end{equation*} 
Moreover, $\hat{x}$ is an $(\epsilon_1,\epsilon_2)$-first-order solution of \eqref{eq1} if  $\exists\hat{\lambda}\in\mathbb{R}^m$ and $\kappa_1, \kappa_2>0$ such that:
\begin{equation*}
    \|\nabla f(\hat{x})+J_F(\hat{x})^T\hat{\lambda}\|\leq\kappa_1\epsilon_1\hspace{0.3cm} \emph{and} \hspace{0.3cm} \|F(\hat{x})\|\leq\kappa_2\epsilon_2.
\end{equation*} 
\noindent \red{If $\epsilon_1=\epsilon_2$, we refer to $\hat{x}$ as an $\epsilon$-first-order solution in the previous definition.}
\end{definition}

%%%%%%%%%%%%%%%%%%%%%%%%%%%%%

\section{A linearized $\ell_q$ penalty method}\label{sec3}
\noindent In this section, we propose a new algorithm for solving nonconvex problem \eqref{eq1} using the $\ell_q$ penalty framework. Let us first introduce few  notations. The penalty function associated with the problem \eqref{eq1} is %\footnote{TO DO: check extention to general penalty h with gradient Lipschitz or general regularization or some bregman distance and composite!}:
\begin{equation}\label{penalty_function}
    \mathcal{P}^q_{\rho}(x)=f(x)+\frac{\rho}{q}{\|F(x)\|_q^q},
\end{equation}
where $q \!\in\! (1,2]$. This penalty function,  $\mathcal{P}^q_{\rho}$,  is differentiable and its gradient   is: 
\[\nabla\mathcal{P}^q_{\rho}(x)={\nabla f(x)+J_F(x)}^T\left(\rho \sign(F(x))\circ|F(x)|^{q-1}\right).\] 
In the next lemma, we show that function $\sign(\cdot)|\cdot|^{\nu}$, where $\nu \in (0, 1]$, satisfies the Holder condition (\red{in the rest of the paper, for the sake of clarity, we provide the proofs of all the lemmas in Appendix}):
\begin{lemma}
\label{holder}[Holder] 
Let $\nu\in(0,1]$. Then, we have:
\[
|\sign(x)|x|^{\nu}-\sign(y)|y|^{\nu}|\leq 3|x-y|^{\nu} \quad \forall x,y\in\mathbb{R}.
\]
\end{lemma}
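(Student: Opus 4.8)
The plan is to set $g(t) \triangleq \sign(t)|t|^\nu$ and to prove $|g(x)-g(y)| \le 3|x-y|^\nu$ by a case analysis on the signs of $x$ and $y$, after first cutting down the number of cases by symmetry. The claimed inequality is symmetric in $x$ and $y$, and it is invariant under the simultaneous sign flip $(x,y)\mapsto(-x,-y)$ (since $g$ is odd and $|{-x}-({-y})| = |x-y|$). Hence it suffices to treat two situations: the same-sign case $x,y \ge 0$ (boundary cases where one of them vanishes are absorbed here), and the opposite-sign case $x > 0 > y$.

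The key auxiliary fact I would isolate first is the subadditivity of $t \mapsto t^\nu$ on the nonnegative reals: for all $a,b \ge 0$ and $\nu \in (0,1]$,
\[
(a+b)^\nu \le a^\nu + b^\nu.
\]
This follows by normalizing. Assuming $a+b>0$, the numbers $s = a/(a+b)$ and $1-s = b/(a+b)$ lie in $[0,1]$, and since $\nu \le 1$ one has $u^\nu \ge u$ for every $u\in[0,1]$; summing gives $s^\nu + (1-s)^\nu \ge s + (1-s) = 1$, and multiplying through by $(a+b)^\nu$ yields the claim.

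In the same-sign case $x,y \ge 0$ we have $g(x)-g(y) = x^\nu - y^\nu$; assuming without loss of generality $x \ge y$ and applying subadditivity with $a = x-y$, $b = y$ gives $x^\nu \le (x-y)^\nu + y^\nu$, hence $|g(x)-g(y)| = x^\nu - y^\nu \le (x-y)^\nu = |x-y|^\nu$, which already beats the required constant. In the opposite-sign case $x>0>y$, writing $a=x>0$ and $b=-y>0$ we get $g(x)-g(y) = x^\nu + (-y)^\nu = a^\nu + b^\nu$, while $|x-y| = a+b$. Since $t\mapsto t^\nu$ is nondecreasing and $a,b \le a+b$, we obtain $a^\nu + b^\nu \le 2(a+b)^\nu = 2|x-y|^\nu$. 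Combining the two cases gives the bound with constant $\max\{1,2\} = 2 \le 3$, so the stated inequality holds.

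I expect no serious obstacle here; the only real subtlety is organizing the sign cases correctly and noticing that the discontinuity of $g$ at the origin is precisely what forces the constant above $1$ — the worst case being opposite signs, where the two contributions add rather than cancel. The generous constant $3$ leaves ample slack, so a slightly lossy estimate in either case is harmless.
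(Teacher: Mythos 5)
Your proof is correct, and it actually establishes the inequality with the sharper constant $2$ in place of $3$. The route is genuinely different from the paper's: the paper first splits
\[
\sign(x)|x|^{\nu}-\sign(y)|y|^{\nu}=(\sign(x)-\sign(y))|x|^{\nu}+\sign(y)\bigl(|x|^{\nu}-|y|^{\nu}\bigr)
\]
and bounds the two pieces separately (by $2|x-y|^{\nu}$ and $|x-y|^{\nu}$ respectively, each via its own sign case analysis), which is where the constant $3$ comes from; you instead exploit the symmetry $(x,y)\mapsto(y,x)$ and the oddness of $g$ to reduce to just two sign configurations and treat $g(x)-g(y)$ directly in each, so the two contributions are never double-counted. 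Both arguments ultimately rest on the same analytic ingredient, the subadditivity $(a+b)^{\nu}\le a^{\nu}+b^{\nu}$ for $a,b\ge 0$; the paper derives it from concavity of $t\mapsto t^{\nu}$ on $[0,\infty)$ via convex combinations, while your normalization argument ($u^{\nu}\ge u$ on $[0,1]$) is shorter and more elementary. What your approach buys is a cleaner case structure and a tighter constant; what the paper's decomposition buys is that it mirrors the vector-valued estimate \eqref{holder_inequality0} used later, where the $\sign$ and modulus parts are handled coordinatewise. Since the downstream results only use the inequality with some absolute constant, your constant $2$ would propagate harmlessly (indeed slightly improve the constants in \eqref{holder_inequality0} and \eqref{eq_assu}).
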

\begin{proof}
See Appendix.
\end{proof}

\noindent From \red{the} previous lemma, using properties of norms, one can conclude that the function $v \mapsto \frac{1}{q} \|v\|_q^q$, with $q \in (1, 2]$ and $v\in\mathbb{R}^m$, has {the gradient Holder continuous w.r.t. Euclidean norm  $\|\cdot\|$}, i.e.,:
\begin{equation} \label{holder_inequality0}
\| \sign(v)\circ|v|^{q-1} - \sign(w)\circ|w|^{q-1} \| \leq 3 \times m^{\frac{2-q}{2}} \| v- w\|^{q-1} \quad \forall v,w\in\mathbb{R}^m.  
\end{equation}
{Indeed, we have:
\begin{align*}
   &\resizebox{\textwidth}{!}{$\| \sign(v)\circ|v|^{q-1} - \sign(w)\circ|w|^{q-1} \| = \left[ \sum_{i=1}^m \left( \left|\sign(v_i)|v_i|^{q-1} - \sign(w_i)|w_i|^{q-1}\right| \right)^2\right]^{\frac{1}{2}}$}\\
  &\leq 3  \left[ \sum_{i=1}^m \left(\left|v_i-w_i\right|^2\right)^{q-1}\right]^{\frac{1}{2}}  = 3  \left[ \sum_{i=1}^m \left(\left|v_i-w_i\right|^2\right)^{q-1} \times 1^{2-q} \right]^{\frac{1}{2}}\\
  & \leq 3 \left(\sum_{i=1}^m \left|v_i-w_i\right|^2\right)^\frac{q-1}{2} \times \left(\sum_{i=1}^m 1\right)^{\frac{2-q}{2}} = 3\times m^{\frac{2-q}{2}}\|v-w\|^{q-1},
\end{align*}
where the last inequality follows from the Holder inequality, i.e., for any $x,y\in\mathbb{R}^m$ and for any 
\( r, s \in \mathbb{R}_{+} \) the following holds:
\[
\left( \sum_{i=1}^{m} |x_i|^r |y_i|^s \right)^{r+s} 
\leq 
\left( \sum_{i=1}^{m} |x_i|^{r+s} \right)^r 
\left( \sum_{i=1}^{m} |y_i|^{r+s} \right)^s.
\]
}

\noindent Relation \eqref{holder_inequality0} implies the following inequality \cite{DevGli:13}:
\begin{equation} \label{holder_inequality}
\textstyle
\frac{1}{q}\|v\|_q^q \leq \frac{1}{q}\|w\|_q^q + \langle \sign(w)\circ|w|^{q-1}, v-w\rangle + \frac{3\times m^{\frac{2-q}{2}}}{q} \| v- w\|^{q}, \quad \forall v,w\in\mathbb{R}^m.
\end{equation}

\noindent Further, let us denote the following function derived from linearization \red{in a Gauss-Newton fashion}  of  the objective function and the functional constraints, at a given point $\bar{x}$, in the penalty function:
\begin{align*}
 \bar{\mathcal{P}}^q_{\rho}(x;\bar{x})=f(\bar{x})+\langle\nabla f(\bar{x}),x-\bar{x}\rangle +\frac{\rho}{q}{\|F(\bar{x})+J_F(\bar{x})(x-\bar{x})\|_q^q}.   
\end{align*}  
Note that the function $\bar{\mathcal{P}}^q_{\rho}(\cdot;\bar{x})$ is always convex since $q>1$. Let us also introduce the following criticality measure for the penalty function $\mathcal{P}^q_{\rho}$, for conducting our analysis, inspired by \cite{CarGou:11}. \red{For} $0 < r \leq 1$, we define: 
\begin{equation}
\label{eq:Psi}
\Psi_r(x) \triangleq  \bar{\mathcal{P}}^q_{\rho}(x;x) - \min_{\|y-x\|\leq r}  \bar{\mathcal{P}}^q_{\rho}(y;x) = \mathcal{P}^q_{\rho}(x) - \min_{\|y-x\|\leq r}  \bar{\mathcal{P}}^q_{\rho}(y;x).
\end{equation}
In particular, following \cite{ Yua:85}, $\Psi_r(x)$ is continuous for all $x$, and  $x^*$ is a critical point of \red{penalty function}  $\mathcal{P}^q_{\rho}$ if
\begin{equation}
\label{eq:critical_point}
\Psi_r(x^*) = 0.
\end{equation}
\red{In the next lemma we prove the above claim, see also Lemma 2.1 in \cite{Yua:85}.}

\begin{lemma}\label{yuan85} Let $ q\in (1,2]$,  $0<r\leq1$ and $\Psi_r(\cdot)$ be as in  \eqref{eq:Psi}. Then,   $\Psi_r(x) \geq 0$ and $\Psi_r(x) = 0$ if and only if $x$ is a critical point of the penalty function  $\mathcal{P}^q_{\rho}$. Moreover,  $\Psi_r(\cdot)$ is continuous.
\end{lemma}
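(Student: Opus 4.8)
The plan is to establish the three assertions in turn, the crucial algebraic fact being that the linearized penalty $\bar{\mathcal{P}}^q_{\rho}(\cdot;x)$ agrees with the true penalty $\mathcal{P}^q_{\rho}$ in both value and gradient at the expansion point $y=x$. First, nonnegativity is immediate: since the center $y=x$ is feasible for the constrained minimization defining $\Psi_r$, we have $\min_{\|y-x\|\le r}\bar{\mathcal{P}}^q_{\rho}(y;x)\le \bar{\mathcal{P}}^q_{\rho}(x;x)=\mathcal{P}^q_{\rho}(x)$, whence $\Psi_r(x)\ge 0$ directly from the second expression in \eqref{eq:Psi}.

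For the equivalence, I would first compute $\nabla_y\bar{\mathcal{P}}^q_{\rho}(y;x)=\nabla f(x)+J_F(x)^T\big(\rho\,\sign(F(x)+J_F(x)(y-x))\circ|F(x)+J_F(x)(y-x)|^{q-1}\big)$ and evaluate it at $y=x$, obtaining exactly $\nabla\mathcal{P}^q_{\rho}(x)$. Since $q>1$, the map $v\mapsto\frac1q\|v\|_q^q$ is convex and its argument is affine in $y$, so $\bar{\mathcal{P}}^q_{\rho}(\cdot;x)$ is a convex differentiable function. If $x$ is critical, then $\nabla_y\bar{\mathcal{P}}^q_{\rho}(x;x)=\nabla\mathcal{P}^q_{\rho}(x)=0$, and a vanishing gradient of a convex function forces $y=x$ to be a global minimizer of $\bar{\mathcal{P}}^q_{\rho}(\cdot;x)$, in particular a minimizer over the ball, so $\Psi_r(x)=0$. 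Conversely, if $\Psi_r(x)=0$, the center $y=x$ attains the minimum over $\{\|y-x\|\le r\}$; because $x$ is interior to this ball, for every direction $d$ the point $x+td$ is feasible for all small $t>0$, so optimality yields $\langle\nabla_y\bar{\mathcal{P}}^q_{\rho}(x;x),d\rangle\ge 0$ for all $d$, forcing $\nabla_y\bar{\mathcal{P}}^q_{\rho}(x;x)=\nabla\mathcal{P}^q_{\rho}(x)=0$, i.e. $x$ is critical.

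Finally, for continuity I would write $\Psi_r(x)=\mathcal{P}^q_{\rho}(x)-\phi(x)$ with $\phi(x)\triangleq\min_{\|y-x\|\le r}\bar{\mathcal{P}}^q_{\rho}(y;x)$. The term $\mathcal{P}^q_{\rho}$ is continuous since $f,F\in\mathcal{C}^1$ and $v\mapsto\|v\|_q^q$ is continuous. To handle $\phi$, I would perform the change of variables $z=y-x$, so the feasible region becomes the fixed compact ball $B=\{z:\|z\|\le r\}$ and $\phi(x)=\min_{z\in B}g(x,z)$ with $g(x,z)=f(x)+\langle\nabla f(x),z\rangle+\frac{\rho}{q}\|F(x)+J_F(x)z\|_q^q$ jointly continuous in $(x,z)$. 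Continuity of the value function then follows from Berge's maximum theorem, or directly: on any compact neighborhood $K$ of a fixed point, $g$ is uniformly continuous on $K\times B$, and comparing the optimal $z$ for nearby $x,x'$ gives $|\phi(x)-\phi(x')|\le\omega_g(\|x-x'\|)$ with $\omega_g$ the modulus of continuity, so $\phi$, and hence $\Psi_r$, is continuous.

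I expect the continuity argument to be the only delicate step, precisely because the constraint set $\{\|y-x\|\le r\}$ moves with $x$; the change of variables $z=y-x$ is the key device that converts this into a parametric minimization over a fixed compact set, after which either Berge's theorem or the uniform-continuity estimate closes the argument. The nonnegativity and the critical-point characterization are comparatively routine once the gradient-matching identity $\nabla_y\bar{\mathcal{P}}^q_{\rho}(x;x)=\nabla\mathcal{P}^q_{\rho}(x)$ and the convexity of $\bar{\mathcal{P}}^q_{\rho}(\cdot;x)$ are in place.
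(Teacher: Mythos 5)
Your proposal is correct, and for the parts the paper actually proves it follows essentially the same route: nonnegativity from feasibility of the center $y=x$, and the implication $\Psi_r(x)=0\Rightarrow\nabla\mathcal{P}^q_{\rho}(x)=0$ from the interior first-order condition at $s=0$ combined with the gradient-matching identity $\nabla_y\bar{\mathcal{P}}^q_{\rho}(x;x)=\nabla\mathcal{P}^q_{\rho}(x)$. Where you go beyond the paper is in completeness: the appendix proof establishes only that direction of the equivalence and says nothing about continuity, deferring both the converse and the continuity claim to the citation of Yuan's 1985 paper. Your converse argument (a critical point has vanishing gradient of the convex model $\bar{\mathcal{P}}^q_{\rho}(\cdot;x)$, hence is its global minimizer, hence $\Psi_r(x)=0$) and your continuity argument (the substitution $z=y-x$ turning the moving ball into a fixed compact set, followed by Berge's theorem or the modulus-of-continuity estimate $|\phi(x)-\phi(x')|\leq\sup_{z\in B}|g(x,z)-g(x',z)|$) are both sound and are exactly the pieces one would need to make the lemma self-contained; the joint continuity of $g$ follows from $f,f_i\in\mathcal{C}^1$ and the continuity of $v\mapsto\|v\|_q^q$. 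In short, your write-up is a strict superset of the paper's proof, with no gaps.
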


\begin{proof}
See appendix.
\end{proof}

\noindent Let us also introduce the following pseudo-criticality measure: 
\begin{equation}
\label{eq:Psi_r}
\begin{aligned}
\bar{\Psi}(x, \beta) &\triangleq \bar{\mathcal{P}}^q_{\rho}(x;x) - \min_{y \in \mathbb{R}^n} \left\{ \bar{\mathcal{P}}^q_{\rho}(y;x)+ \frac{\beta} {2}\|y-x\|^2 \right\}. \\
%&= \bar{\mathcal{P}}^q_{\rho}(x_k;x_k) -  \bar{\mathcal{P}}^q_{\rho}(x_{k+1};x_k)-\frac{\beta_{k+1}}{2}\|x_{k+1}-x_k\|^2.
\end{aligned}
\end{equation}
We establish later a relation between these two criticality measures ${\Psi}_r(x)$ and  $\bar{\Psi}(x,\beta)$, respectively.

\medskip 

\noindent To solve the optimization problem \eqref{eq1} we propose the following \textit{Linearized  $\ell_q$ penalty} (qLP) algorithm, \red{where} we linearize the objective function and the functional constraints, in a Gauss-Newton fashion, within the penalty function at the current iterate and add an \textit{adaptive} quadratic regularization.
\begin{algorithm}
\caption{Linearized $\ell_q$ penalty (qLP) method}\label{alg1}
\begin{algorithmic}[1]
\State  $\textbf{Initialization: } x_0, \rho>0, \text{ and } \ubar{\beta} \geq 1$.
\State $k \gets 0$
\While{$\text{ stopping criterion is not satisfied }$}
    \State $\text{generate a proximal parameter } \beta_{k+1}\geq\ubar{\beta}$ such that
    \State $x_{k+1}\gets\argmin_{x\in\mathbb{R}^n}{\bar{\mathcal{P}}^q_{\rho}(x;x_{k})+\frac{\beta_{k+1}}{2}{\|x-x_{k}\|}^2}$ satisfies the descent:
    \begin{equation} \label{smoothness}
     \mathcal{P}^q_{\rho}(x_{k+1}) \leq \bar{\mathcal{P}}^q_{\rho}(x_{k+1};x_k) + \frac{\beta_{k+1}}{2} \|x_{k+1} - x_k\|^2.
\end{equation}
    \State $k \gets k+1$
\EndWhile
\end{algorithmic}
\end{algorithm}
\noindent \textit{To the best of our knowledge qLP algorithm is new and its convergence behavior has not been analyzed before in the literature.}  Note that  the objective function in the subproblem of Step 5 of  Algorithm \ref{alg1} is always strongly convex  since the convex function $\bar{\mathcal{P}}^q_{\rho}(\cdot;{x}_k)$ is regularized with a quadratic term. {Moreover, it has a locally Holder continuous gradient with exponent $q-1$ (see \eqref{holder_inequality0} for the Holder continuity of the gradient of the term $v \mapsto \|v\|_q^q$ and note that the quadratic term has also a Holder continuous gradient on any compact subset of $\mathbb{R}^n$). Indeed, the gradient of the subproblem objective function is:
\begin{align*}
&\nabla_x\left({\bar{\mathcal{P}}^q_{\rho}(x;x_{k})+\frac{\beta_{k+1}}{2}{\|x-x_{k}\|}^2}\right) \\
&= \nabla f(x_k) + \rho J_F(x_k)^T\sign(l_F(x;x_k))\circ|l_F(x;x_k)|^{q-1} + \beta_{k+1} (x-x_k).
\end{align*}
Hence,
\begin{align*}
&\left\|\nabla_x\left({\bar{\mathcal{P}}^q_{\rho}(x;x_{k})+\frac{\beta_{k+1}}{2}{\|x-x_{k}\|}^2}\right) - \nabla_x\left({\bar{\mathcal{P}}^q_{\rho}(y;x_{k})+\frac{\beta_{k+1}}{2}\|y-x_{k}\|^2}\right)\right\| \\
&\resizebox{\textwidth}{!}{$\leq \rho \|J_F(x_k)\| \cdot \left\|\sign(l_F(x;x_k))\circ|l_F(x;x_k)|^{q-1}-\sign(l_F(y;x_k))\circ|l_F(y;x_k)|^{q-1}\right\| + \beta_{k+1}\|x-y\|$}\\
&\overset{\eqref{holder_inequality0}}{\leq} 3\times m^{\frac{2-q}{2}}\rho\|J_F(x_k)\| \cdot \|l_F(x;x_k)- l_F(y;x_k)\|^{q-1} + \beta_{k+1}\|x-y\|\\
& = 3\times m^{\frac{2-q}{2}}\rho\|J_F(x_k)\| \cdot \|J_F(x_k)(x-y)\|^{q-1} + \beta_{k+1}\|x-y\|^{2-q}\|x-y\|^{q-1}\\
&\leq \left(3\times m^{\frac{2-q}{2}}\rho M_F^q + \beta_{k+1}\|x-y\|^{2-q}\right)\|x-y\|^{q-1}.
\end{align*}
Thus, on any compact set $\mathcal{S}\subset\mathbb{R}^n$ with diameter $D_{\mathcal{S}}$, the the objective function in the subproblem of Step 5 of Algorithm \ref{alg1}, which is unconstrained, has a Holder continuous gradient with constant $\left(3\times m^{\frac{2-q}{2}}\rho M_F^q + \beta_{k+1}D_{\mathcal{S}}^{2-q}\right)$ and exponent $q-1$. Assuming that the iterates of Algorithm \ref{alg1} are bounded and the bound is independent of the algorithm's parameters,  using a gradient descent type method to solve the subproblem, one can still show that the inner iterates remain within the compact set where the outer iterates belong. Hence, the diameter $D_{\mathcal{S}}$ of the compact set $\mathcal{S}$ containing the inner/outer iterates is independent on $\rho$.} Therefore, finding a solution of the subproblem in Step 5 is easy as there are efficient gradient descent type methods that can minimize this type of smooth objective function (see e.g., \cite{DevGli:13}).   In the sequel, we denote: 
\[
\Delta x_{k}=x_{k}-x_{k-1} \hspace{0.3cm} \forall k\geq1.
\]
\noindent Let us show that we can always choose  {an adaptive regularization parameter} \(\beta_{k+1}\) guaranteeing  the descent \red{property} \eqref{smoothness}. Indeed, since $f$ and $F$ are smooth functions, if one chooses \red{adaptively (i.e., depending on the current iterate $x_k$)}:

\vspace{-0.3cm}
 
\begin{equation} \label{eq_assu}
    \beta_{k+1} \geq L_f +  \left(3\times m^{\frac{(2-q)(q-1)}{2q}}\times 2^{2-q} \right)^{\frac{1}{q}}q^{\frac{q-1}{q}}\rho^{\frac{1}{q}}L_F\left(\mathcal{P}^q_{\rho}(x_k)-\bar{f}\right)^{\frac{q-1}{q}},
\end{equation} 
\red{then} the  descent property \eqref{smoothness} follows, as established in the following lemma.  

\begin{lemma}\label{lemma3}[Existence of $\beta_{k+1}$]  If  the sequence $\{x_{k}\}_{k\geq0}$ generated by Algorithm \ref{alg1} is in some compact set $\mathcal{S}$ on which Assumptions \ref{assump1} and \ref{assump2} hold and we choose $\beta_{k+1}$  as in \eqref{eq_assu}, then the descent property \eqref{smoothness} holds. Consequently, the following decrease condition is also satisfied:
  \begin{align}\label{eq_descent}
  \mathcal{P}^q_{\rho}(x_{k+1})\leq \mathcal{P}^q_{\rho}(x_k) - \frac{\beta_{k+1}}{2}\|x_{k+1}-x_{k}\|^2.
  \end{align}
\end{lemma}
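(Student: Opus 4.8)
The plan is to verify the descent inequality \eqref{smoothness} at the new iterate $x_{k+1}$ for the stated choice of $\beta_{k+1}$, and then to read off the stronger decrease \eqref{eq_descent} from the optimality of $x_{k+1}$. I start by splitting the model gap into an objective part and a penalty part:
\[
\mathcal{P}^q_{\rho}(x_{k+1}) - \bar{\mathcal{P}}^q_{\rho}(x_{k+1};x_k) = \big(f(x_{k+1}) - l_f(x_{k+1};x_k)\big) + \frac{\rho}{q}\big(\|F(x_{k+1})\|_q^q - \|l_F(x_{k+1};x_k)\|_q^q\big).
\]
For the objective part, Assumption \ref{assump2}(i) and the standard descent lemma give $f(x_{k+1}) - l_f(x_{k+1};x_k) \le \frac{L_f}{2}\|x_{k+1}-x_k\|^2$ on $\mathcal{S}$, which already accounts for the $L_f$ term in \eqref{eq_assu}.

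For the penalty part I apply the H\"older-smoothness inequality \eqref{holder_inequality} with $v = F(x_{k+1})$ and $w = l_F(x_{k+1};x_k)$. This produces a first-order term $\langle \sign(l_F)\circ|l_F|^{q-1}, F(x_{k+1})-l_F\rangle$ plus a remainder $\frac{3\,m^{(2-q)/2}}{q}\|F(x_{k+1})-l_F\|^q$. Two ingredients control these. First, the Lipschitz Jacobian (Assumption \ref{assump2}(ii)) makes the linearization error of $F$ quadratic in the step, namely $\|F(x_{k+1}) - l_F(x_{k+1};x_k)\| \le \frac{L_F}{2}\|x_{k+1}-x_k\|^2$, which follows from $F(x)-l_F(x;x_k)=\int_0^1\big(J_F(x_k+t(x-x_k))-J_F(x_k)\big)(x-x_k)\,dt$; this is what lifts the H\"older exponent so that every term can be compared to $\|x_{k+1}-x_k\|^2$. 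Second, the size of the penalty gradient is tied to the penalty value: from $\frac{\rho}{q}\|F(x_k)\|_q^q = \mathcal{P}^q_{\rho}(x_k)-f(x_k) \le \mathcal{P}^q_{\rho}(x_k)-\bar{f}$ (using $f\ge\bar{f}$ from \eqref{lem1}) I obtain
\[
\|F(x_k)\|_q^{q-1} \le \Big(\tfrac{q}{\rho}\big(\mathcal{P}^q_{\rho}(x_k)-\bar{f}\big)\Big)^{\frac{q-1}{q}},
\]
which, after passing between $\|\cdot\|_q$ and the Euclidean norm (this is where the factors involving powers of $m$ enter), converts the estimate into the $\rho^{1/q}\big(\mathcal{P}^q_{\rho}(x_k)-\bar{f}\big)^{(q-1)/q}$ dependence of \eqref{eq_assu}.

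Collecting the objective bound and the penalty bound and matching coefficients shows that the choice \eqref{eq_assu} guarantees $\mathcal{P}^q_{\rho}(x_{k+1}) - \bar{\mathcal{P}}^q_{\rho}(x_{k+1};x_k) \le \frac{\beta_{k+1}}{2}\|x_{k+1}-x_k\|^2$, i.e.\ \eqref{smoothness}. For the decrease \eqref{eq_descent}, I use that the subproblem objective $h(x)=\bar{\mathcal{P}}^q_{\rho}(x;x_k)+\frac{\beta_{k+1}}{2}\|x-x_k\|^2$ is $\beta_{k+1}$-strongly convex (since $\bar{\mathcal{P}}^q_{\rho}(\cdot;x_k)$ is convex for $q>1$) and that $x_{k+1}$ minimizes it. Writing the strong-convexity inequality $h(x_k)\ge h(x_{k+1})+\frac{\beta_{k+1}}{2}\|x_k-x_{k+1}\|^2$ and using $\bar{\mathcal{P}}^q_{\rho}(x_k;x_k)=\mathcal{P}^q_{\rho}(x_k)$ gives
\[
\bar{\mathcal{P}}^q_{\rho}(x_{k+1};x_k) + \frac{\beta_{k+1}}{2}\|x_{k+1}-x_k\|^2 \le \mathcal{P}^q_{\rho}(x_k) - \frac{\beta_{k+1}}{2}\|x_{k+1}-x_k\|^2,
\]
and combining this with \eqref{smoothness} yields \eqref{eq_descent}.

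The main obstacle is precisely this passage from the H\"older estimate to a quadratic bound: because the penalty has only a H\"older continuous gradient (exponent $q-1<1$), there is no quadratic descent lemma available, and a naive bound of order $\|x_{k+1}-x_k\|^q$ cannot be absorbed into $\frac{\beta_{k+1}}{2}\|x_{k+1}-x_k\|^2$ for small steps. The resolution, and the most delicate part, is to combine the quadratic model error of $F$ with the feasibility control of the penalty gradient so as to keep the threshold \eqref{eq_assu} free of $M_F$ and of the diameter of $\mathcal{S}$, leaving only the explicit $\rho^{1/q}$ and $(\mathcal{P}^q_{\rho}(x_k)-\bar{f})^{(q-1)/q}$ factors; tracking the exact constant $\big(3\,m^{(2-q)(q-1)/(2q)}2^{2-q}\big)^{1/q}q^{(q-1)/q}$ is then the main bookkeeping step.
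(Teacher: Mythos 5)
Your overall architecture is the paper's: the same split of $\mathcal{P}^q_{\rho}(x_{k+1})-\bar{\mathcal{P}}^q_{\rho}(x_{k+1};x_k)$ into an objective part (handled by the descent lemma for $f$) and a penalty part (handled by \eqref{holder_inequality} with $v=F(x_{k+1})$, $w=l_F(x_{k+1};x_k)$ and the quadratic linearization error of $F$), and the same strong-convexity/optimality argument to pass from \eqref{smoothness} to \eqref{eq_descent}. However, there is a concrete gap at the one step that carries the real content of the proof. The first-order term produced by \eqref{holder_inequality} is $\langle \rho\,\sign(l_F(x_{k+1};x_k))\circ|l_F(x_{k+1};x_k)|^{q-1},\,F(x_{k+1})-l_F(x_{k+1};x_k)\rangle$, so the quantity you must control is $\rho\,\|l_F(x_{k+1};x_k)\|_q^{q-1}$, the linearized constraint at the \emph{new} point, whereas you bound $\|F(x_k)\|_q^{q-1}=\|l_F(x_k;x_k)\|_q^{q-1}$ via $\frac{\rho}{q}\|F(x_k)\|_q^q\le \mathcal{P}^q_{\rho}(x_k)-\bar{f}$. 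These two quantities differ by $J_F(x_k)\Delta x_{k+1}$, and patching the difference with $\|J_F(x_k)\Delta x_{k+1}\|\le M_F\|\Delta x_{k+1}\|$ would reintroduce exactly the $M_F$- and step-dependent constants you say you want to keep out of \eqref{eq_assu}. The paper instead writes $\frac{\rho}{q}\|l_F(x_{k+1};x_k)\|_q^q=\bar{\mathcal{P}}^q_{\rho}(x_{k+1};x_k)-l_f(x_{k+1};x_k)$ and controls it through the optimality inequality \eqref{initial_decrease} together with $f(x_{k+1})-l_f(x_{k+1};x_k)\le\frac{L_f}{2}\|\Delta x_{k+1}\|^2$ and $f(x_{k+1})\ge\bar{f}$, obtaining $\left(\mathcal{P}^q_{\rho}(x_k)-\bar{f}\right)-(\beta_{k+1}-L_f)\|\Delta x_{k+1}\|^2$, and then applies Young's inequality with exponents $q$ and $q/(q-1)$ followed by the concavity estimate $(a-b)^{q-1}\le 2^{2-q}a^{q-1}-b^{q-1}$.

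These last two devices are not mere bookkeeping: they are what make your ``matching coefficients'' claim true. Even granting the correct bound $\rho\|l_F(x_{k+1};x_k)\|_q^{q-1}\lesssim \rho^{1/q}\left(\mathcal{P}^q_{\rho}(x_k)-\bar{f}\right)^{(q-1)/q}$, the H\"older remainder $\frac{3 m^{(2-q)/2}\rho}{q}\left(\frac{L_F}{2}\|\Delta x_{k+1}\|^2\right)^{q}$ is of order $\|\Delta x_{k+1}\|^{2q}$ and cannot be absorbed into $\frac{\beta_{k+1}}{2}\|\Delta x_{k+1}\|^2$ uniformly in the step (only for steps of norm at most one, or at the price of a diameter-dependent constant, which you explicitly wish to avoid). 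In the paper this term is cancelled by the negative $\|\Delta x_{k+1}\|^{2(q-1)}$ term that the Young/concavity manipulation generates (see \eqref{bound_grad_norm_squared} and \eqref{to_use_next}), using $\frac{1}{2q^{2-q}}\ge\frac{1}{q2^q}$ for $q\in(1,2]$. So your route is the right one, but the two omitted steps --- expanding $\|l_F(x_{k+1};x_k)\|_q^q$ through the optimality of $x_{k+1}$, and the Young-plus-concavity cancellation --- constitute the proof rather than its bookkeeping.
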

\begin{proof}
See Appendix.
\end{proof}
 \noindent Note that the choice of \( \beta_{k+1} \) in \eqref{eq_assu} is inspired by \cite{MarOku:24,ElbNec:25}, where \( q \) is chosen as \( q=2 \). The proof of the above lemma follows similar arguments as in \cite{MarOku:24,ElbNec:25}, with this paper extending the analysis to \( q \in (1,2] \). Lemma \ref{lemma3} establishes that the qLP algorithm is implementable since choosing \( \beta_{k+1} \) as in \eqref{eq_assu} already ensures \eqref{smoothness} and \eqref{eq_descent}. In practice, the regularization parameter \( \beta_k \) can be determined using a backtracking scheme, as described in Algorithm 2 in \cite{ElbNec:25}.   Note that in a usual approach, based on \eqref{holder_inequality}, one uses the descent \eqref{smoothness} with $q$ regularization  instead of quadratic regularization and $\beta_{k+1}$ of order  $\mathcal{O}(\rho)$, which will result in a  worse complexity. However,  using  a quadratic regularization in \eqref{smoothness} and a novel choice of $\beta_{k+1}$ (see \eqref{eq_assu}),  allow us to derive improved rates in this paper.  {Next, we prove that  $x_{k+1}$   guarantees the following:}
 
\begin{lemma} \label{bounded_regularization}
 Let Assumption \ref{assump2} hold on a compact set $\mathcal{S}$ and assume that the sequence $\{x_{k}\}_{k\geq0}$ generated by Algorithm \ref{alg1} is in $\mathcal{S}$. Then, we have:
\begin{equation}
\label{succesfull}
 r_k \triangleq \frac{\mathcal{P}^q_{\rho}(x_k) - \mathcal{P}^q_{\rho}(x_{k+1})}{\bar{\Psi}(x_k, \beta_{k+1})}\geq 1 \quad {\forall k \geq 0}. 
 \end{equation}
\end{lemma}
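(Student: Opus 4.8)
The plan is to unfold the definition of the pseudo-criticality measure $\bar{\Psi}(x_k,\beta_{k+1})$ in \eqref{eq:Psi_r} and compare it directly against the actual decrease of the penalty function, using nothing more than the defining property of $x_{k+1}$ as the exact minimizer of the regularized model together with the descent inequality \eqref{smoothness} guaranteed by Lemma \ref{lemma3}. First I would record the elementary but crucial identity that linearizing at the base point returns the original value, namely $\bar{\mathcal{P}}^q_{\rho}(x_k;x_k)=\mathcal{P}^q_{\rho}(x_k)$, which is immediate from the definitions of $\mathcal{P}^q_{\rho}$ and $\bar{\mathcal{P}}^q_{\rho}$ and is already noted in \eqref{eq:Psi}. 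Since $x_{k+1}$ is by construction the minimizer of $y\mapsto \bar{\mathcal{P}}^q_{\rho}(y;x_k)+\frac{\beta_{k+1}}{2}\|y-x_k\|^2$, the minimum appearing in \eqref{eq:Psi_r} is attained at $x_{k+1}$, so that
\[
\bar{\Psi}(x_k,\beta_{k+1}) = \mathcal{P}^q_{\rho}(x_k) - \bar{\mathcal{P}}^q_{\rho}(x_{k+1};x_k) - \frac{\beta_{k+1}}{2}\|x_{k+1}-x_k\|^2.
\]

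Next I would invoke the descent property \eqref{smoothness} from Lemma \ref{lemma3}, which bounds the true penalty value at the new iterate by the regularized model value,
\[
\mathcal{P}^q_{\rho}(x_{k+1}) \leq \bar{\mathcal{P}}^q_{\rho}(x_{k+1};x_k) + \frac{\beta_{k+1}}{2}\|x_{k+1}-x_k\|^2.
\]
Subtracting this inequality from $\mathcal{P}^q_{\rho}(x_k)$ and comparing with the displayed expression for $\bar{\Psi}(x_k,\beta_{k+1})$ immediately yields
\[
\mathcal{P}^q_{\rho}(x_k)-\mathcal{P}^q_{\rho}(x_{k+1}) \;\geq\; \bar{\Psi}(x_k,\beta_{k+1}),
\]
which, upon division by the denominator, is precisely the claim $r_k\geq 1$.

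The only point requiring genuine care is the positivity of the denominator, so that the ratio $r_k$ is well defined: evaluating the regularized model at the feasible choice $y=x_k$ shows $\bar{\Psi}(x_k,\beta_{k+1})\geq 0$, and a Lemma~\ref{yuan85}-type argument shows that it is strictly positive unless $x_k$ is already a critical point of $\mathcal{P}^q_{\rho}$, in which case the algorithm has converged and the ratio need not be formed. Thus the main (and rather mild) obstacle is merely ruling out the degenerate $0/0$ situation; apart from that, the argument involves no analytic estimation whatsoever, being a direct combination of the minimizing property of $x_{k+1}$ with the descent inequality \eqref{smoothness}.
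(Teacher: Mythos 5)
Your argument is correct and coincides with the paper's own proof: both rewrite $\bar{\Psi}(x_k,\beta_{k+1})$ using the fact that the minimum in \eqref{eq:Psi_r} is attained at $x_{k+1}$ and that $\bar{\mathcal{P}}^q_{\rho}(x_k;x_k)=\mathcal{P}^q_{\rho}(x_k)$, and then observe that $r_k\geq 1$ is exactly the descent condition \eqref{smoothness} enforced by Algorithm \ref{alg1}. Your extra remark about ruling out the degenerate $0/0$ case is a welcome bit of care that the paper glosses over by simply asserting $\bar{\Psi}(x_k,\beta_{k+1})>0$, but it does not change the substance of the argument.
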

\begin{proof} 
 See appendix.
\end{proof}

%%%%%%%%%%%%%%%%%%%%%%%%%%%%%%%%%%%%%%%

\section{Convergence analysis}
\label{sec4}
\noindent In this section, we first explore the asymptotic convergence of the qLP algorithm (Algorithm \ref{alg1}) and then derive its efficiency in finding an $\epsilon$-first-order solution for the problem \eqref{eq1}. Our analysis combines Lyapunov analysis techniques, inspired by \cite{ElbNec:25,XieWri:21}, with criticality measure analysis, inspired by \cite{CarGou:11}. In the sequel, we are using the $\ell_q$ penalty function, $\mathcal{P}^q_{\rho}$,  as a Lyapunov function.  The evaluation of the Lyapunov function along the iterates of qLP algorithm  is denoted by:
\begin{equation}\label{lyapunov_function}
 P_{k}=\mathcal{P}^q_{\rho}(x_k) \hspace{0.3cm} \forall k\geq0.
\end{equation}
It is clear, from Lemma \ref{lemma3},  that $\{P_{k}\}_{k\geq0}$ is decreasing  and later we prove that it is bounded from bellow.  In the sequel, we assume that $x_0$ is chosen \red{as}:
\begin{equation}\label{eq4}
    \|F(x_0)\|_q^q\leq\min\left\{1,\frac{qc_0}{\rho}\right\} \hspace{0.7cm} \text{ for some } c_0>0,
\end{equation}
and that $f(x_0)\leq\bar{\alpha}$. Let us define:
\begin{equation}\label{alpha_hat}
    \bar{P}\triangleq\bar{\alpha}+c_0.
\end{equation}
 Moreover, let us choose:
%\begin{subequations}
\begin{align}\label{rho_def}
 \rho\geq\max\left\{\!{1}, (q+1)\rho_0\right\}.   
\end{align}
%\end{subequations}
Using the definition of $\mathcal{P}^q_{\rho}$, we have: 
%\begin{subequations}
 \begin{align}
    \mathcal{P}^q_{\rho}(x_0)&=f(x_0)+\frac{\rho}{q}\|F(x_0)\|_q^q{\overset{{\eqref{eq4}}}{\leq}}\bar{\alpha}+c_0 \label{ine10}.
\end{align} 
%\end{subequations}
It then follows, after some re-arrangements, that:
%\begin{subequations}
\begin{align}
   \bar{f}+c_0-\ubar{P}&\geq f(x_0)+\frac{\rho}{q}\|F(x_0)\|_q^q-\ubar{P}\nonumber\\
   &{\overset{{(\rho\geq(q+1)\rho_0)}}{\geq}}f(x_0)+\frac{\rho_0}{q}\|F(x_0)\|_q^q-\ubar{P}+\frac{\rho}{q+1}\|F(x_0)\|_q^q\nonumber\\
   &{\overset{{\eqref{lem1}}}{\geq}} \frac{\rho}{q+1}\|F(x_0)\|_q^q\geq 0.\label{ine11}
\end{align}

\noindent The following lemma shows that if the sequence $\{x_{k}\}_{k\geq0}$ generated by  Algorithm \ref{alg1} is bounded, then   the Lyapunov sequence $\{P_{k}\}_{k\geq0}$ is also bounded.

\begin{lemma}
\label{bbound} 
Consider  Algorithm \ref{alg1} and let $\{P_{k}\}_{k\geq0}$ as defined in  \eqref{lyapunov_function}. If  the sequence $\{x_{k}\}_{k\geq0}$ generated by Algorithm \ref{alg1} is in some compact set $\mathcal{S}$ on which Assumptions \ref{assump1}, \ref{assump2} and \ref{assump3} hold and, moreover, $\rho$ is chosen as in \eqref{rho_def} and $x_0$ is chosen as in \eqref{eq4}, then we have the following:
\begin{equation}\label{important}
  %\begin{gather}
    %x_{k}\in\mathcal{S}^0_{\bar{P}} \label{bound_x},\\
  % \|\lambda_{k}\|^2\leq\frac{(M_f+\beta D_S)^2}{\sigma^2}\leq 2(\rho-\rho_0) \label{lambda},\\
    \ubar{P}  \leq P_{k}\leq \bar{P}  \hspace{0.5cm}\forall k\geq0, % \label{bound_of_P}
  %\end{gather}
\end{equation}
where $\ubar{P}$ is  defined in \eqref{lem1} and $\bar{P}$ defined in \eqref{alpha_hat} for any fixed constant $c_0$.
\end{lemma}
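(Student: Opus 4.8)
The plan is to treat the two inequalities separately, noting that the lower bound is essentially definitional while the upper bound is driven entirely by the monotone decrease established in Lemma~\ref{lemma3}. For the lower bound, I would simply observe that since each iterate satisfies $x_k \in \mathbb{R}^n$, we have
\[
P_k = \mathcal{P}^q_{\rho}(x_k) = f(x_k) + \frac{\rho}{q}\|F(x_k)\|_q^q \;\geq\; \inf_{x\in\mathbb{R}^n}\Big\{f(x) + \frac{\rho}{q}\|F(x)\|_q^q\Big\} = \ubar{P},
\]
and Assumption~\ref{assump1} (through the bound \eqref{lem1}) guarantees that $\ubar{P} > -\infty$, so the lower bound is both valid and finite. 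This step requires no work beyond invoking the definition of the infimum.

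For the upper bound, the key is that Lemma~\ref{lemma3}, specifically the decrease condition \eqref{eq_descent}, makes $\{P_k\}_{k\geq 0}$ non-increasing: for every $k$ we have $P_{k+1} \leq P_k - \frac{\beta_{k+1}}{2}\|x_{k+1}-x_k\|^2 \leq P_k$, since $\beta_{k+1} \geq \ubar{\beta} \geq 1 > 0$. Hence it suffices to bound the initial value $P_0$, after which the bound propagates to all $k$ by $P_k \leq P_{k-1} \leq \cdots \leq P_0$. To control $P_0$ I would use the initialization \eqref{eq4}: from $\|F(x_0)\|_q^q \leq qc_0/\rho$ I obtain $\frac{\rho}{q}\|F(x_0)\|_q^q \leq c_0$, while from $\|F(x_0)\|_q^q \leq 1$ it follows that $\|F(x_0)\| = \|F(x_0)\|_2 \leq \|F(x_0)\|_q \leq 1$ (using $\|\cdot\|_2 \leq \|\cdot\|_q$ for $q\in(1,2]$), so Assumption~\ref{assump3} yields $f(x_0) \leq \bar{\alpha}$. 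Combining these two estimates gives exactly \eqref{ine10}, namely $P_0 = f(x_0) + \frac{\rho}{q}\|F(x_0)\|_q^q \leq \bar{\alpha} + c_0 = \bar{P}$, and chaining $P_k \leq P_0 \leq \bar{P}$ closes the upper bound.

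I do not expect a genuine obstacle here, as the result is a direct consequence of the monotonicity already proved. The only point requiring care is hypothesis bookkeeping: the decrease \eqref{eq_descent} is valid only while the iterates remain in the compact set $\mathcal{S}$ on which Assumptions~\ref{assump1}--\ref{assump3} hold, which is precisely the standing hypothesis of the lemma. I would state this explicitly so that the monotonicity is legitimately available for every $k$ and the propagation $P_k \leq P_0$ is not circular. It is also worth remarking that the separate requirement $f(x_0)\leq\bar{\alpha}$ in the initialization is in fact implied by Assumption~\ref{assump3} together with $\|F(x_0)\|_q^q \leq 1$, so the argument relies only on the stated choice of $x_0$ in \eqref{eq4} and on $\rho$ chosen as in \eqref{rho_def}.
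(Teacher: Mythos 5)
Your proposal is correct and follows essentially the same route as the paper: the lower bound is immediate from the definition of $\ubar{P}$ as an infimum in \eqref{lem1}, and the upper bound combines the monotone decrease \eqref{eq_descent} from Lemma~\ref{lemma3} with the initial bound $P_0 \leq \bar{\alpha} + c_0$ obtained from \eqref{eq4} and Assumption~\ref{assump3} (the paper phrases the propagation as an induction rather than a direct chain, and inserts $\rho_0$ as an intermediate in the lower bound, but these are cosmetic differences). Your observation that $f(x_0)\leq\bar{\alpha}$ already follows from $\|F(x_0)\|_q^q\leq 1$ via $\|\cdot\|_2\leq\|\cdot\|_q$ and Assumption~\ref{assump3} is a small but valid tightening of the paper's standing hypotheses.
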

\begin{proof}
See Appendix.
\end{proof}
\begin{comment}

\noindent The following lemma shows that if the sequence $\{x_{k}\}_{k\geq0}$ generated by  Algorithm \ref{alg1} is bounded then the Lyapunov sequence $\{P_{k}\}_{k\geq0}$ is also bounded.
\begin{lemma}\label{bbound} Consider  Algorithm \ref{alg1} and let $\{P_{k}\}_{k\geq0}$ as defined in  \eqref{lyapunov_function}. If Assumption  \ref{assump2}  holds on a compact set  $\mathcal{S}$ and assume that $\{x_{k}\}_{k\geq0}$, the sequence generated by Algorithm \ref{alg1}, is in $\mathcal{S}$, and $D_S$  the radius of $\mathcal{S}$, then 
\begin{subequations}\label{important}
  \begin{gather}
   % x_{k}\in\mathcal{S} \label{bound_x},\\
  % \|\lambda_{k}\|^2\leq\frac{(M_f+\beta D_S)^2}{\sigma^2}\leq 2(\rho-\rho_0) \label{lambda},\\
    \ubar{P}  \leq P_{k}\leq \bar{P}  \hspace{0.5cm}\forall k\geq0, \label{bound_of_P}
  \end{gather}
\end{subequations}
where $\ubar{P}$ is  defined in \eqref{lem1}.
\end{lemma}
\begin{proof}
See Appendix.
\end{proof}
\end{comment}
\noindent Let us bound the gradient of $\ell_q$ penalty function. Denote $C=\frac{3m^{\frac{2-q}{2}}\rho M_F L_F^{q-1}}{2^{q-1}}$.
\begin{lemma}
\label{bounded_gradient}
[Boundedness of $\nabla\mathcal{P}^q_{\rho}$]
Let  Assumption  \ref{assump2}  hold on a compact set $\mathcal{S}$ and  the sequence generated by  Algorithm \ref{alg1} satisfies $\{x_{k}\}_{k\geq0} \subseteq \mathcal{S}$. Then:
\[
    \|\nabla\mathcal{P}^q_{\rho}(x_{k+1})\|\leq C\|\Delta x_{k+1}\|^{2(q-1)}+\Gamma_{k+1}\|\Delta x_{k+1}\|,
\]
where $\Gamma_{k+1}=L_f+ m^{\frac{2-q}{2}}q^{\frac{q-1}{q}} L_F \rho^{\frac{1}{q}}\left(\mathcal{P}^q_{\rho}(x_{k}) - \bar{f}\right)^{\frac{q-1}{q}} +\beta_{k+1} \quad  \forall k\geq 0$.
\end{lemma}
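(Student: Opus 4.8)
The plan is to begin from the first-order optimality condition of the strongly convex subproblem in Step~5 of Algorithm~\ref{alg1}. Since $x_{k+1}$ is its unique unconstrained minimizer, setting the subproblem gradient (displayed just before the lemma) to zero at $x_{k+1}$ gives
\[
\nabla f(x_k) + \rho J_F(x_k)^T\left(\sign(l_F(x_{k+1};x_k))\circ|l_F(x_{k+1};x_k)|^{q-1}\right) = -\beta_{k+1}\Delta x_{k+1}.
\]
Writing $g_{k+1}=\sign(F(x_{k+1}))\circ|F(x_{k+1})|^{q-1}$ and $\hat{g}_k=\sign(l_F(x_{k+1};x_k))\circ|l_F(x_{k+1};x_k)|^{q-1}$, I would subtract the (vanishing) left-hand side from $\nabla\mathcal{P}^q_{\rho}(x_{k+1})=\nabla f(x_{k+1})+\rho J_F(x_{k+1})^T g_{k+1}$ and split the Jacobian difference to obtain the decomposition
\[
\nabla\mathcal{P}^q_{\rho}(x_{k+1}) = \left(\nabla f(x_{k+1})-\nabla f(x_k)\right) + \rho\left(J_F(x_{k+1})-J_F(x_k)\right)^T g_{k+1} + \rho J_F(x_k)^T\left(g_{k+1}-\hat{g}_k\right) - \beta_{k+1}\Delta x_{k+1}.
\]
Each of the four terms is then bounded by the triangle inequality.

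Three of the terms are routine. The first is bounded by $L_f\|\Delta x_{k+1}\|$ via the gradient-Lipschitz condition in Assumption~\ref{assump2}, and the last by $\beta_{k+1}\|\Delta x_{k+1}\|$. For the third term I would combine $\|J_F(x_k)\|\le M_F$ with the Holder estimate \eqref{holder_inequality0} applied to $g_{k+1}-\hat{g}_k$, giving $\|g_{k+1}-\hat{g}_k\|\le 3m^{(2-q)/2}\|F(x_{k+1})-l_F(x_{k+1};x_k)\|^{q-1}$, and then the standard linearization bound $\|F(x_{k+1})-l_F(x_{k+1};x_k)\|\le\frac{L_F}{2}\|\Delta x_{k+1}\|^2$, a consequence of the $L_F$-Lipschitz continuity of $J_F$. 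Raising to the power $q-1$ reproduces exactly the constant $C=\frac{3m^{(2-q)/2}\rho M_F L_F^{q-1}}{2^{q-1}}$ in front of $\|\Delta x_{k+1}\|^{2(q-1)}$.

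The main obstacle is the remaining term $\rho\left(J_F(x_{k+1})-J_F(x_k)\right)^T g_{k+1}$, which the Jacobian-Lipschitz condition in Assumption~\ref{assump2} bounds by $\rho L_F\|\Delta x_{k+1}\|\,\|g_{k+1}\|$; everything hinges on estimating $\|g_{k+1}\|=\left(\sum_{i=1}^m|F_i(x_{k+1})|^{2(q-1)}\right)^{1/2}$ by a power of the $q$-norm. I would apply Holder's inequality with exponent $p=q/(2(q-1))$, which is $\ge1$ precisely because $q\le2$, to get $\sum_i|F_i(x_{k+1})|^{2(q-1)}\le m^{(2-q)/q}\left(\|F(x_{k+1})\|_q^q\right)^{2(q-1)/q}$; taking square roots and using $m\ge1$ together with $1/q\le1$ yields $\|g_{k+1}\|\le m^{(2-q)/2}\left(\|F(x_{k+1})\|_q^q\right)^{(q-1)/q}$. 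Finally, the descent property \eqref{eq_descent} gives $\mathcal{P}^q_{\rho}(x_{k+1})\le\mathcal{P}^q_{\rho}(x_k)$, and since $f(x_{k+1})\ge\bar{f}$ this implies $\frac{\rho}{q}\|F(x_{k+1})\|_q^q\le\mathcal{P}^q_{\rho}(x_k)-\bar{f}$. Substituting and tracking the powers, the factor $\rho\cdot\rho^{-(q-1)/q}=\rho^{1/q}$ converts $\rho L_F\|g_{k+1}\|$ into $m^{(2-q)/2}q^{(q-1)/q}L_F\rho^{1/q}\left(\mathcal{P}^q_{\rho}(x_k)-\bar{f}\right)^{(q-1)/q}$, exactly the middle summand of $\Gamma_{k+1}$. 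Collecting the three contributions linear in $\|\Delta x_{k+1}\|$ into $\Gamma_{k+1}$ and keeping the $C\|\Delta x_{k+1}\|^{2(q-1)}$ term separate establishes the claim. The delicate points are verifying $p\ge1$ so that Holder applies, and making the powers of $m$, $q$ and $\rho$ line up exactly with the stated $C$ and $\Gamma_{k+1}$.
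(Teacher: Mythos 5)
Your proposal is correct and follows essentially the same route as the paper's proof: the same optimality condition for the subproblem, the same four-term decomposition of $\nabla\mathcal{P}^q_{\rho}(x_{k+1})$, the Holder estimate \eqref{holder_inequality0} combined with the linearization error bound for the term producing $C\|\Delta x_{k+1}\|^{2(q-1)}$, and the bound $\|g_{k+1}\|\leq m^{\frac{2-q}{2}}\|F(x_{k+1})\|_q^{q-1}$ together with the descent property \eqref{eq_descent} to obtain the middle summand of $\Gamma_{k+1}$. The only (immaterial) difference is that you apply Holder's inequality with exponent $q/(2(q-1))$ directly to $\sum_i|F_i|^{2(q-1)}$, whereas the paper pairs $(|f_i|^2)^{q-1}$ with $1^{2-q}$ and then uses $\|\cdot\|_2\leq\|\cdot\|_q$; both yield the same constant.
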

\begin{proof}
See Appendix.
\end{proof}

\noindent From Lemma \ref{lemma3} it follows  that when using a backtracking scheme, with a geometrically increasing parameter $\mu>1$, $\beta_{k+1}$ can be always upper \red{bounded~as}:
\begin{equation} \label{bar_gamma}
\small % Reduce equation size instead of \textstyle
\bar{\beta} \triangleq \sup_{k\geq 1} \beta_k \leq \mu \left(L_f + \left(3\times m^{\left(\frac{(2-q)(q-1)}{2q}\right)}\times 2^{2-q}\right)^{\frac{1}{q}} q^{\frac{q-1}{q}}\rho^{\frac{1}{q}}L_F (\bar{P}-\bar{f})^{\frac{q-1}{q}}\right).
\end{equation}

\noindent Moreover, since $\{\beta_k\}_{k\geq1}$ is bounded, then $\{\Gamma_k\}_{k\geq1}$ is also bounded. In the sequel, we define this bound as follows:
\begin{equation}\label{boundgam}
    \bar{\Gamma}\triangleq \sup_{k\geq1} \Gamma_k \leq L_f+ m^{\frac{2-q}{2}}q^{\frac{q-1}{q}} L_F \rho^{\frac{1}{q}}\left(\bar{P} - \bar{f}\right)^{\frac{q-1}{q}} +\bar{\beta} .
\end{equation}

%%%%%%%%%%%%%%%%%%%%%%%%%%%%%%%%%%%%%%

\subsection{Global asymptotic convergence} 
\noindent In this section we prove global convergence for the iterates generated by  Algorithm \ref{alg1} and also convergence rates to an $\epsilon$-first-order solution. Based on the previous lemmas, we are now ready  to present the global asymptotic convergence of the iterates of  Algorithm \ref{alg1}.
\begin{theorem}\label{unused_lemma}[Limit points] If  the sequence, $\{x_{k}\}_{k\geq0}$, generated by Algorithm \ref{alg1} is in some compact set $\mathcal{S}$ on which Assumptions \ref{assump1}, \ref{assump2} and \ref{assump3} hold and, moreover, $\rho$ is chosen as in \eqref{rho_def} and $x_0$ is chosen as in \eqref{eq4}, then any limit point $x_{\rho}^*$ of the sequence $\{x_{k}\}_{k\geq0}$ is a critical point of the penalty function $\mathcal{P}^q_{\rho}(\cdot)$ defined in \eqref{penalty_function}, i.e., $\nabla\mathcal{P}^q_{\rho}(x_{\rho}^*)=0$. Moreover: 
\[
\nabla f(x_{\rho}^*)+{J_F(x_{\rho}^*)}^T\lambda^*=0,\hspace{0.3cm}\text{where} \hspace{0.3cm}  \lambda^* =\rho\sign(F(x_{\rho}^*))\circ|F(x_{\rho}^*)|^{q-1}.
\]
Furthermore, if we select $\rho$ such that $\rho\geq \mathcal{O}\left(\frac{1}{\epsilon^{q-1}}\right)$, we can also establish that $\|F(x^*_{\rho})\|\leq\mathcal{O}(\epsilon)$. In this context, we \red{get that  $x^*_{\rho}$ is an $(0,\epsilon)$-first-order solution of} problem \eqref{eq1}.
\end{theorem}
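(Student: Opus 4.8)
The plan is to treat the penalty value $P_k=\mathcal{P}^q_{\rho}(x_k)$ as a Lyapunov function and lean on the three technical lemmas already in hand. First I would note that, by the decrease condition \eqref{eq_descent} of Lemma \ref{lemma3}, the sequence $\{P_k\}_{k\geq0}$ is nonincreasing, while Lemma \ref{bbound} bounds it from below by $\ubar{P}$. Hence $\{P_k\}$ converges, and telescoping \eqref{eq_descent} gives
\[
\sum_{k=0}^{\infty}\frac{\beta_{k+1}}{2}\|\Delta x_{k+1}\|^2 \leq P_0-\ubar{P}<\infty .
\]
Since $\beta_{k+1}\geq\ubar{\beta}\geq 1$, the series $\sum_k\|\Delta x_{k+1}\|^2$ is summable, so $\|\Delta x_{k+1}\|\to 0$ as $k\to\infty$.

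Next I would substitute this into the gradient estimate of Lemma \ref{bounded_gradient}. Using the uniform bound $\Gamma_{k+1}\leq\bar{\Gamma}$ from \eqref{boundgam}, we obtain
\[
\|\nabla\mathcal{P}^q_{\rho}(x_{k+1})\|\leq C\|\Delta x_{k+1}\|^{2(q-1)}+\bar{\Gamma}\|\Delta x_{k+1}\| .
\]
Because $q\in(1,2]$ forces $2(q-1)>0$, both terms vanish as $\|\Delta x_{k+1}\|\to 0$, so $\nabla\mathcal{P}^q_{\rho}(x_{k+1})\to 0$ along the entire sequence. For any limit point $x_{\rho}^*$ (which lies in $\mathcal{S}$, as $\mathcal{S}$ is compact and contains every iterate), I would pick a subsequence $x_{k_j}\to x_{\rho}^*$; continuity of $\nabla\mathcal{P}^q_{\rho}$ (the map $v\mapsto\sign(v)\circ|v|^{q-1}$ is continuous for $q>1$ and $\nabla f, J_F$ are continuous) then yields $\nabla\mathcal{P}^q_{\rho}(x_{\rho}^*)=0$. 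Inserting $x_{\rho}^*$ into the explicit formula $\nabla\mathcal{P}^q_{\rho}(x)=\nabla f(x)+J_F(x)^T(\rho\,\sign(F(x))\circ|F(x)|^{q-1})$ gives at once the stated identity with $\lambda^*=\rho\,\sign(F(x_{\rho}^*))\circ|F(x_{\rho}^*)|^{q-1}$.

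For the feasibility estimate I would route everything through the dual multiplier using LICQ (Assumption \ref{assump2}(\ref{ass3})). From the stationarity identity, $\|J_F(x_{\rho}^*)^T\lambda^*\|=\|\nabla f(x_{\rho}^*)\|\leq M_f$, and since the smallest singular value of $J_F$ is bounded below by $\sigma>0$ on $\mathcal{S}$, we get $\|\lambda^*\|\leq M_f/\sigma$. It then remains to convert this back to $\|F(x_{\rho}^*)\|$. Componentwise $|\lambda_i^*|=\rho|F_i(x_{\rho}^*)|^{q-1}$, so $|F_i(x_{\rho}^*)|^2=(|\lambda_i^*|/\rho)^{2/(q-1)}$ and $\|F(x_{\rho}^*)\|^2=\rho^{-2/(q-1)}\sum_i|\lambda_i^*|^{2/(q-1)}$. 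Since $2/(q-1)\geq 2$ for $q\in(1,2]$, the norm inequality $\|\cdot\|_{2/(q-1)}\leq\|\cdot\|_2$ gives $\sum_i|\lambda_i^*|^{2/(q-1)}\leq\|\lambda^*\|^{2/(q-1)}$, whence
\[
\|F(x_{\rho}^*)\|\leq\left(\frac{\|\lambda^*\|}{\rho}\right)^{1/(q-1)}\leq\left(\frac{M_f}{\sigma\rho}\right)^{1/(q-1)} .
\]
Choosing $\rho\geq M_f/(\sigma\epsilon^{q-1})=\mathcal{O}(1/\epsilon^{q-1})$ then forces $\|F(x_{\rho}^*)\|\leq\epsilon$; together with the exact stationarity $\nabla f(x_{\rho}^*)+J_F(x_{\rho}^*)^T\lambda^*=0$ (the $(0,\cdot)$ part) this certifies $x_{\rho}^*$ as a $(0,\epsilon)$-first-order solution of \eqref{eq1}.

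I expect the main obstacle to be precisely this last paragraph, namely obtaining the feasibility bound with the \emph{sharp} power $\mathcal{O}(1/\epsilon^{q-1})$ rather than the weaker $\mathcal{O}(1/\epsilon^{q})$ that would follow from the cruder estimate $\|F(x_{\rho}^*)\|_q^q\leq q(\bar{P}-\bar{f})/\rho$ implied by Lemma \ref{bbound}. The improvement hinges on bounding the multiplier uniformly via LICQ and on careful exponent bookkeeping in the norm conversion, where the inequality $\|\cdot\|_{2/(q-1)}\leq\|\cdot\|_2$ and the exponent $1/(q-1)$ must line up exactly so that $\epsilon$ emerges at the threshold $\rho=\mathcal{O}(1/\epsilon^{q-1})$; everything preceding it is a routine Lyapunov-plus-continuity argument built on the cited lemmas.
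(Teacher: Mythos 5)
Your proposal is correct and, for the first two claims (vanishing steps via the telescoped descent \eqref{eq_descent}, then $\nabla\mathcal{P}^q_{\rho}(x_{\rho}^*)=0$ via Lemma \ref{bounded_gradient}, the bound \eqref{boundgam} and continuity), it coincides with the paper's argument. The only genuine divergence is in how you bound the multiplier $\lambda^*$ for the feasibility estimate. The paper introduces a reference KKT pair $(x^*,y^*)$ of \eqref{eq1}, writes both $y^*=-J_F(x^*)^{+}\nabla f(x^*)$ and $\lambda^*=-J_F(x_{\rho}^*)^{+}\nabla f(x_{\rho}^*)$, and bounds $\|\lambda^*-y^*\|$ by continuity of $J_F^{+}$ and $\nabla f$ on the compact set $\mathcal{S}$; you instead bound $\|\lambda^*\|$ directly from $\|J_F(x_{\rho}^*)^T\lambda^*\|=\|\nabla f(x_{\rho}^*)\|\leq M_f$ together with the uniform lower bound $\sigma>0$ on the smallest singular value of $J_F$ guaranteed by LICQ on $\mathcal{S}$, giving the explicit constant $M_f/\sigma$. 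Your route is arguably cleaner: it does not presuppose the existence of an exact KKT point $x^*$ inside $\mathcal{S}$ and yields a fully explicit bound, whereas the paper's detour through $y^*$ only produces an unspecified constant $M+\|y^*\|$. The final norm conversion is the same in substance; you use $\|\cdot\|_{2/(q-1)}\leq\|\cdot\|_2$ where the paper uses $\|\cdot\|_{2/(q-1)}\leq\|\cdot\|_q$, and both give $\|F(x_{\rho}^*)\|\leq(\|\lambda^*\|/\rho)^{1/(q-1)}=\mathcal{O}(\epsilon)$ for $\rho\geq\mathcal{O}(1/\epsilon^{q-1})$.
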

\begin{proof}
Using Lemma \ref{lemma3} and the fact that $\beta_{k}>0$, we have:
\begin{align*}
     \frac{\beta_{k+1}}{4}\|&\Delta x_{k+1}\|^2\leq P_{k}-P_{k+1}\hspace{0.3cm}\forall k\geq0.
\end{align*}
Let $k\geq0$, by summing up the above inequality from $i=0$ to $i=k$, we obtain:%\begin{subequations}
\begin{align}
\sum_{i=0}^{k}{\frac{\beta_{i+1}}{4}\|\Delta x_{i+1}\|^2}&\leq P_{0}-P_{k+1}{\overset{{}}{\leq}} \bar{P}-\ubar{P}.\label{limit}
\end{align}
%\end{subequations}
Since \eqref{limit} holds for any $k\geq0$, we have:
\[
\sum_{i=1}^{\infty}{\frac{\beta_{i+1}}{4}\|\Delta x_{i+1}\|^2}<\infty.
\]
This, together with the fact that $\beta_{k}>\ubar{\beta}\geq 1$, yields that:
\begin{equation}\label{zero_limit}
    \lim_{k\to\infty}{\|\Delta x_{k}\|}=0.
\end{equation}
Since the sequence $\{x_{k}\}_{k\geq0}$ is bounded, then there exists a convergent subsequence, let us say  $\{x_{k}\}_{k\in\mathcal{K}}$, with the limit $x_{\rho}^*$.
From {Lemma \ref{bounded_gradient}} and \eqref{boundgam}, we have:
\[
\|\nabla\mathcal{P}^q_{\rho}(x_{\rho}^*)\|=\lim_{k\in\mathcal{K}}{\|\nabla\mathcal{P}^q_{\rho}(x_{k})\|}\leq C\lim_{k\in\mathcal{K}}\|\Delta x_{k}\|^{2(q-1)}+\bar{\Gamma}\lim_{k\in\mathcal{K}}\|\Delta x_{k}\|=0.
\]
Therefore, $ \nabla\mathcal{P}^q_{\rho}(x_{\rho}^*)=0$, which means that there exists $\lambda^*=\rho\sign(F(x_{\rho}^*))\circ|F(x_{\rho}^*)|^{q-1}$ such that:
\[
\nabla f(x_{\rho}^*)+{J_F(x_{\rho}^*)}^T\lambda^*=0.
\]
Now, let's consider that $\rho\geq\mathcal{O}(\frac{1}{\epsilon^{q-1}})$ and prove that $\lambda^*=\rho\sign(F(x_{\rho}^*))\circ|F(x_{\rho}^*)|^{q-1}$ is bounded. Indeed, if we consider a KKT point $x^*\in\mathcal{S}$, LICQ ensures the existence of a corresponding finite $y^*$ that satisfies the following KKT conditions for problem \eqref{eq1}:
\[
\nabla f(x^*)+{J_F(x^*)}^Ty^*=0, \hspace{0.3cm}\text{and}\hspace{0.3cm} F(x^*)=0.
\]
It then follows that:
\[
y^*=-{J_F(x^*)}^{+}\nabla f(x^*),
\]
where ${J_F(x^*)}^{+}$ denotes the pseudo-inverse of matrix $J_F(x^*)$. Moreover:
\[
\nabla f(x_{\rho}^*)+{J_F(x_{\rho}^*)}^T\lambda^*=0,\hspace{0.3cm}\text{where} \hspace{0.3cm}   \lambda^*=\rho \sign(F(x_{\rho}^*))\circ |F(x_{\rho}^*)|^{q-1},
\]
we arrive at the relation:
\[
\lambda^*=-{J_F(x_{\rho}^*)}^{+}\nabla f(x_{\rho}^*).
\]
Subsequently, it follows that:
\[
\|\lambda^*-y^*\|=\|{J_F(x^*)}^{+}\nabla f(x^*)-{J_F(x_{\rho}^*)}^{+}\nabla f(x_{\rho}^*)\|.
\]
Given continuity of $J_F^+$ and $\nabla f$, along with the fact that $x^*$ and $x_{\rho}^*$ belong to the compact set $\mathcal{S}$, we conclude that there exists  $M\geq 0$ such that:
\[
\|\lambda^*-y^*\|\leq M.
\]
Hence:
\[
\|\lambda^*\|\leq M +\|y^*\|.
\]
Therefore, from the definition of $\lambda^*$, we get:
\[
\|F(x^*_{\rho})\|=\frac{\||\lambda^*|^{\frac{1}{q-1}}\|}{\rho^{\frac{1}{q-1}}}\leq \left(\frac{\|\lambda^*\|_q}{\rho}\right)^{\frac{1}{q-1}}\leq \mathcal{O}(\epsilon).
\]
This completes our proof.\qed
\end{proof}
\begin{remark}\label{remark_impo}
Note that the previous results apply to any value of $q\in(1,2]$. However, when considering the selection of $\rho$ such that $\rho\geq\mathcal{O}\left(\frac{1}{\epsilon^{q-1}}\right)$, it becomes evident that opting for $q$ close to 1 is the optimal choice. This is because the penalty parameter $\rho$ is smaller when $q$ is large and vice versa.
\end{remark}

%%%%%%%%%%%%%%%%%%%%%

\subsection{\red{Global convergence rates}}
\noindent Let us now investigate the computational complexity  of Algorithm \ref{alg1} for generating an $\epsilon$-first-order solution. First, we relate the model decrease $\bar{\Psi}(x_k, \beta_{k+1})$ to the optimality measure $\Psi_r(x_k)$ in \eqref{eq:Psi}.
\begin{lemma} \label{eq:lemma_2.5} 
Let $0<r\leq1$ and let $\Psi_r(\cdot)$ be defined by \eqref{eq:Psi}. If  the sequence, $\{x_{k}\}_{k\geq0}$ generated by Algorithm \ref{alg1} is in some compact set $\mathcal{S}$ on which Assumption \ref{assump2} holds, then:
\begin{equation} \label{coraliaa}
\bar{\Psi}(x_k, \beta_{k+1}) \geq \frac{1}{2} \min\left(1, \frac{\Psi_r(x_k)}{\beta_{k+1}r^2}\right) \Psi_r(x_k).
\end{equation}
\end{lemma}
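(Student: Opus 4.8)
The plan is to rewrite both criticality measures in terms of the \emph{convex} linearized model and then interpolate along a line segment toward the ball-optimum. First I would introduce the shorthand $\phi(y) \triangleq \bar{\mathcal{P}}^q_{\rho}(y;x_k)$, which is convex because $q>1$ (as noted after the definition of $\bar{\mathcal{P}}^q_{\rho}$). Since linearization at the base point is exact, $\phi(x_k) = \bar{\mathcal{P}}^q_{\rho}(x_k;x_k) = \mathcal{P}^q_{\rho}(x_k)$, and the two measures read
\[
\Psi_r(x_k) = \phi(x_k) - \min_{\|y-x_k\|\leq r}\phi(y), \qquad \bar{\Psi}(x_k,\beta_{k+1}) = \phi(x_k) - \min_{y\in\mathbb{R}^n}\left\{\phi(y)+\tfrac{\beta_{k+1}}{2}\|y-x_k\|^2\right\}.
\]
Let $y_r$ denote a minimizer of $\phi$ over the ball $\{y:\|y-x_k\|\leq r\}$ (it exists by continuity of $\phi$ on this compact set), so that $\phi(x_k)-\phi(y_r)=\Psi_r(x_k)$ and $\|y_r-x_k\|\leq r$.

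Next I would consider the segment $y(t)=x_k+t(y_r-x_k)$ for $t\in[0,1]$. Convexity of $\phi$ gives $\phi(y(t)) \leq (1-t)\phi(x_k)+t\,\phi(y_r) = \phi(x_k)-t\,\Psi_r(x_k)$, while $\|y(t)-x_k\| = t\,\|y_r-x_k\|\leq tr$. Because the minimization defining $\bar{\Psi}(x_k,\beta_{k+1})$ is unconstrained, every $y(t)$ is an admissible trial point, hence
\[
\bar{\Psi}(x_k,\beta_{k+1}) \;\geq\; \phi(x_k)-\phi(y(t))-\tfrac{\beta_{k+1}}{2}\|y(t)-x_k\|^2 \;\geq\; t\,\Psi_r(x_k)-\tfrac{\beta_{k+1}r^2}{2}\,t^2 \;=:\; g(t),
\]
for every $t\in[0,1]$.

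Finally, I would maximize the concave quadratic $g$ over $t\in[0,1]$. Its unconstrained maximizer is $t^\star = \Psi_r(x_k)/(\beta_{k+1}r^2)$. If $t^\star\leq 1$, i.e. $\Psi_r(x_k)\leq\beta_{k+1}r^2$, then taking $t=t^\star$ yields $g(t^\star)=\frac{\Psi_r(x_k)^2}{2\beta_{k+1}r^2}=\frac{1}{2}\frac{\Psi_r(x_k)}{\beta_{k+1}r^2}\Psi_r(x_k)$. If instead $t^\star>1$, then $g$ is increasing on $[0,1]$, and $t=1$ gives $g(1)=\Psi_r(x_k)-\frac{\beta_{k+1}r^2}{2}>\frac{1}{2}\Psi_r(x_k)$, using $\beta_{k+1}r^2<\Psi_r(x_k)$. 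Since $\min\bigl(1,t^\star\bigr)=t^\star$ in the first regime and $=1$ in the second, the two cases combine exactly into the claimed bound with factor $\frac{1}{2}\min\bigl(1,\Psi_r(x_k)/(\beta_{k+1}r^2)\bigr)$.

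I expect no conceptual obstacle; the delicate points are purely bookkeeping: justifying existence of the ball-minimizer $y_r$ (compactness) and of the regularized minimum in $\bar{\Psi}$ (strong convexity of $\phi(\cdot)+\tfrac{\beta_{k+1}}{2}\|\cdot-x_k\|^2$), and handling the case split in the one-dimensional optimization over $t$ cleanly. The essential mechanism—travelling a controlled distance $t\|y_r-x_k\|$ toward the constrained optimum while keeping the model value below $\phi(x_k)-t\Psi_r(x_k)$—is the device of the analogous lemma in \cite{CarGou:11}, transported to the $\ell_q$ setting, where it is precisely the convexity of $\bar{\mathcal{P}}^q_{\rho}(\cdot;x_k)$ for $q\in(1,2]$ that legitimizes the interpolation step.
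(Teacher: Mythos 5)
Your proof is correct and follows essentially the same route as the paper's: evaluating the unconstrained regularized objective at a convex combination $x_k+t(y_r-x_k)$ of $x_k$ and the ball minimizer, and using convexity of $\bar{\mathcal{P}}^q_{\rho}(\cdot;x_k)$ to interpolate. The only cosmetic difference is that you unify the paper's two cases ($\beta_{k+1}r^2\lessgtr\Psi_r(x_k)$) into a single maximization of the concave quadratic $g(t)$ over $[0,1]$, whose interior optimum and endpoint $t=1$ reproduce exactly the paper's two estimates.
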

\begin{proof}
See appendix.
\end{proof}

\noindent Lemma \ref{eq:lemma_2.5} indicates that $r_k$ in \eqref{succesfull} is well-defined whenever the current iteration is not a first-order critical point, i.e., $\Psi_r(x_k) \neq 0$. Let $0<\epsilon\leq 1$. 
The following theorem demonstrates that after $K\geq\mathcal{O}(\frac{\rho^{\frac{1}{q}}}{\epsilon^{2}})$ iterations of Algorithm \ref{alg1}, the criticality measure $\Psi_{\epsilon}(x_k)\leq \epsilon^2$.
\begin{lemma} \label{lemma_criticality}
Consider Algorithm \ref{alg1} and let $\{P_{k}\}_{k\geq0}$ be defined as in \eqref{lyapunov_function}. If  the sequence $\{x_{k}\}_{k\geq0}$, generated by Algorithm \ref{alg1} is in some compact set $\mathcal{S}$ on which Assumptions \ref{assump1}, \ref{assump2} and \ref{assump3} hold and, moreover, $\rho$ is chosen as in \eqref{rho_def} and $x_0$ is chosen as in \eqref{eq4}.  Then, for any $\epsilon \in (0, 1]$, 
after \[
K = \left\lceil2\bar{\beta}\left(\bar{P}-\ubar{P}\right) \epsilon^{-2} \right\rceil= \mathcal{O}\left(\frac{\rho^{\frac{1}{q}} }{\epsilon^2}\right)
\] iterations of Algorithm \ref{alg1}, we obtain  $\Psi_{\epsilon}(x_k)\leq \epsilon^2$.
\end{lemma}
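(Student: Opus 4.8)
The plan is to run a standard potential/telescoping argument on the Lyapunov sequence $P_k = \mathcal{P}^q_{\rho}(x_k)$, chaining the decrease guarantee of Lemma \ref{bounded_regularization} with the criticality comparison of Lemma \ref{eq:lemma_2.5}, and then arguing by contradiction. Throughout I set $r=\epsilon$; this is admissible in Lemma \ref{eq:lemma_2.5} since $0<\epsilon\leq 1$. First, combining $r_k\geq 1$ from \eqref{succesfull} with \eqref{coraliaa} gives, for every $k\geq 0$,
\begin{equation*}
P_k - P_{k+1} \;\geq\; \bar{\Psi}(x_k,\beta_{k+1}) \;\geq\; \frac{1}{2}\min\!\left(1,\frac{\Psi_{\epsilon}(x_k)}{\beta_{k+1}\epsilon^2}\right)\Psi_{\epsilon}(x_k).
\end{equation*}

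\noindent Next I would assume, for contradiction, that $\Psi_{\epsilon}(x_k) > \epsilon^2$ for all $k\in\{0,\dots,K-1\}$, and convert the displayed bound into a \emph{uniform} per-iteration decrease. This requires splitting on the $\min$. If the minimum equals $1$, then $P_k-P_{k+1} > \tfrac{1}{2}\epsilon^2$, and since $\bar{\beta}\geq\ubar{\beta}\geq 1$ this is at least $\tfrac{\epsilon^2}{2\bar{\beta}}$. If instead the minimum equals $\tfrac{\Psi_{\epsilon}(x_k)}{\beta_{k+1}\epsilon^2}$, then
\begin{equation*}
P_k-P_{k+1} \;>\; \frac{1}{2}\,\frac{\Psi_{\epsilon}(x_k)^2}{\beta_{k+1}\epsilon^2} \;>\; \frac{1}{2}\,\frac{\epsilon^4}{\beta_{k+1}\epsilon^2} \;=\; \frac{\epsilon^2}{2\beta_{k+1}} \;\geq\; \frac{\epsilon^2}{2\bar{\beta}},
\end{equation*}
using $\beta_{k+1}\leq\bar{\beta}$. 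In both branches one obtains the single clean bound $P_k-P_{k+1} > \tfrac{\epsilon^2}{2\bar{\beta}}$.

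\noindent Summing this over $k=0,\dots,K-1$ telescopes the left-hand side to $P_0-P_K$, while Lemma \ref{bbound} bounds it by $\bar{P}-\ubar{P}$, yielding
\begin{equation*}
\bar{P}-\ubar{P} \;\geq\; P_0 - P_K \;>\; K\,\frac{\epsilon^2}{2\bar{\beta}},
\qquad\text{i.e.}\qquad K \;<\; \frac{2\bar{\beta}(\bar{P}-\ubar{P})}{\epsilon^2}.
\end{equation*}
This contradicts the choice $K=\lceil 2\bar{\beta}(\bar{P}-\ubar{P})\epsilon^{-2}\rceil$, so the assumption fails and there must exist some index $k\in\{0,\dots,K-1\}$ with $\Psi_{\epsilon}(x_k)\leq\epsilon^2$. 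Finally, the stated complexity order follows by substituting the bound $\bar{\beta}=\mathcal{O}(\rho^{1/q})$ from \eqref{bar_gamma}, which turns $K=\mathcal{O}(\bar{\beta}\,\epsilon^{-2})$ into $K=\mathcal{O}(\rho^{1/q}\epsilon^{-2})$.

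\noindent I do not expect any deep obstacle here, since the substantive analytic work is already packaged in Lemmas \ref{bounded_regularization}, \ref{eq:lemma_2.5}, and \ref{bbound}. The only point demanding care is the case split on the $\min$ together with the reduction to the worst-case constant $\tfrac{\epsilon^2}{2\bar{\beta}}$; the step that makes the two branches collapse to the same bound is precisely the normalization $\ubar{\beta}\geq 1$, so I would flag the use of $\bar{\beta}\geq 1$ explicitly rather than leave it implicit.
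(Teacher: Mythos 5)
Your proposal is correct and follows essentially the same route as the paper: set $r=\epsilon$ in Lemma \ref{eq:lemma_2.5}, combine with $r_k\geq 1$ from \eqref{succesfull} to get the uniform per-iteration decrease $\epsilon^2/(2\bar{\beta})$ (the paper handles the $\min$ by directly replacing $\beta_{k+1}$ with $\bar{\beta}$ and using monotonicity plus $\bar{\beta}\geq 1$, rather than your explicit case split, but this is the same computation), then telescope against $\bar{P}-\ubar{P}$ from Lemma \ref{bbound}. The contradiction framing versus the paper's counting of iterations with $\Psi_{\epsilon}(x_k)>\epsilon^2$ is an immaterial difference.
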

\begin{proof} 
See Appendix.
\end{proof}

\noindent In the next theorem, we prove that when the optimality measure $\Psi_{\epsilon}(x_k)$ is sufficiently small, then we have an approximate critical point of the penalty function $\mathcal{P}^q_{\rho}(\cdot)$ \red{and and an $\epsilon$-first-order solution for the problem \eqref{eq1}}.

\begin{theorem}
If  the sequence, $\{x_{k}\}_{k\geq0}$ generated by Algorithm \ref{alg1} is in some compact set $\mathcal{S}$ on which Assumptions \ref{assump1}, \ref{assump2} and \ref{assump3} hold and, moreover, $\rho$ is chosen as in \eqref{rho_def} and $x_0$ is chosen as in \eqref{eq4} and  let $x_k$ be an iterate satisfying $\Psi_{\epsilon}(x_k) \leq \epsilon^2$ for a given tolerance $0<\epsilon \leq 1$.
Then, there exists $\lambda_k$ such that
\begin{equation} \label{kktt1}
   \|\nabla f(x_k) + J_F(x_k)^T \lambda_k\| \leq \epsilon. 
\end{equation}
Moreover, if $\rho = \mathcal{O}\left(\frac{1}{\epsilon^{q-1}}\right)$, then $x_k$ is an $\epsilon$-first-order solution for \eqref{eq1},  within $k = \mathcal{O}\left(\frac{1}{\epsilon^{2+\frac{q-1}{q}}}\right)$ iterations.
\end{theorem}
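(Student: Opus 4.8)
The plan is to exploit that the criticality measure $\Psi_{\epsilon}(x_k)$ is built from the \emph{constrained} minimizer of the convex model $\bar{\mathcal{P}}^q_{\rho}(\cdot;x_k)$ over the ball $\{y:\|y-x_k\|\leq\epsilon\}$, and to read the multiplier $\lambda_k$ directly off that minimizer. First I would let $y_k \triangleq \argmin_{\|y-x_k\|\leq\epsilon}\bar{\mathcal{P}}^q_{\rho}(y;x_k)$ and set $\lambda_k \triangleq \rho\,\sign(l_F(y_k;x_k))\circ|l_F(y_k;x_k)|^{q-1}$, so that the subproblem gradient at $y_k$ is exactly $g_k \triangleq \nabla_y\bar{\mathcal{P}}^q_{\rho}(y_k;x_k) = \nabla f(x_k)+J_F(x_k)^T\lambda_k$. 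This identifies the quantity on the left of \eqref{kktt1} with $\|g_k\|$, reducing the first claim to bounding $\|g_k\|$.

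The key step for \eqref{kktt1} is to control $\|g_k\|$ through the KKT conditions of the constrained convex subproblem together with convexity. Since $\bar{\mathcal{P}}^q_{\rho}(\cdot;x_k)$ is convex (as $q>1$) and the feasible ball is convex, the optimality conditions yield a coefficient $\mu\geq0$ with $g_k = -\mu(y_k-x_k)$, where $\mu>0$ only if the constraint is active, i.e. $\|y_k-x_k\|=\epsilon$; when $\mu=0$ we trivially have $\|g_k\|=0\leq\epsilon$. In the active case, the gradient inequality for the convex model gives $\Psi_{\epsilon}(x_k) = \bar{\mathcal{P}}^q_{\rho}(x_k;x_k)-\bar{\mathcal{P}}^q_{\rho}(y_k;x_k) \geq \langle g_k, x_k-y_k\rangle = \mu\|y_k-x_k\|^2 = \mu\epsilon^2$. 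Combining this with the hypothesis $\Psi_{\epsilon}(x_k)\leq\epsilon^2$ forces $\mu\leq1$, hence $\|g_k\| = \mu\epsilon\leq\epsilon$, which is precisely \eqref{kktt1}.

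For feasibility I would next bound $\|F(x_k)\|$. LICQ on $\mathcal{S}$ (Assumption \ref{assump2}) gives a uniform lower bound $\sigma>0$ on the smallest singular value of $J_F$, so from $J_F(x_k)^T\lambda_k = g_k-\nabla f(x_k)$ and Assumption \ref{assump2} I obtain $\|\lambda_k\|\leq\sigma^{-1}(\|g_k\|+M_f)\leq\sigma^{-1}(1+M_f)$, a bound independent of $\rho$ and $\epsilon$. Then, repeating the norm manipulation used at the end of the proof of Theorem \ref{unused_lemma} (with $l_F(y_k;x_k)$ playing the role of $F(x^*_{\rho})$), I get $\|l_F(y_k;x_k)\|\leq(\|\lambda_k\|_q/\rho)^{1/(q-1)}=\mathcal{O}(\rho^{-1/(q-1)})$; inserting $\rho=\mathcal{O}(1/\epsilon^{q-1})$ yields $\|l_F(y_k;x_k)\|=\mathcal{O}(\epsilon)$. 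Finally, since $\|l_F(y_k;x_k)-F(x_k)\|=\|J_F(x_k)(y_k-x_k)\|\leq M_F\epsilon$, I conclude $\|F(x_k)\|\leq\mathcal{O}(\epsilon)+M_F\epsilon=\mathcal{O}(\epsilon)$, so together with \eqref{kktt1} the point $x_k$ is an $\epsilon$-first-order solution in the sense of Definition \ref{firstorder}.

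For the complexity I would simply chain Lemma \ref{lemma_criticality}, which guarantees $\Psi_{\epsilon}(x_k)\leq\epsilon^2$ after $K=\mathcal{O}(\rho^{1/q}/\epsilon^2)$ iterations, with the choice $\rho=\mathcal{O}(1/\epsilon^{q-1})$; since then $\rho^{1/q}=\mathcal{O}(\epsilon^{-(q-1)/q})$, this gives $K=\mathcal{O}(\epsilon^{-2-(q-1)/q})$, as claimed. The main obstacle is the first part: one must correctly extract the multiplier $\lambda_k$ from the constrained subproblem and show that the active-constraint coefficient satisfies $\mu\leq1$ via the convexity-based lower bound $\Psi_{\epsilon}(x_k)\geq\mu\epsilon^2$. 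The feasibility and complexity parts are then essentially bookkeeping on top of earlier results, namely the LICQ bound on $\lambda_k$ and the norm inequality already derived in Theorem \ref{unused_lemma}.
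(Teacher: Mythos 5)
Your proposal is correct and follows essentially the same route as the paper's proof: read off $\lambda_k$ from the minimizer of the linearized model over the $\epsilon$-ball, split on whether the ball constraint is active, use convexity of the model together with $\Psi_\epsilon(x_k)\leq\epsilon^2$ to bound the KKT multiplier of the ball constraint (your $\mu\leq 1$ is the paper's $u_k^*\leq\epsilon$ in a different parametrization), then bound $\|\lambda_k\|$ via LICQ and compactness to get $\|F(x_k)\|=\mathcal{O}(\epsilon)$, and finally chain with Lemma \ref{lemma_criticality}. The only (harmless) deviations are that you apply the gradient inequality to the whole convex model rather than isolating the penalty term $\varphi$, and you bound $\|\lambda_k\|$ directly by the uniform singular-value lower bound $\sigma$ from LICQ instead of comparing $\lambda_k$ to a reference multiplier $y^*$ through continuity of the pseudo-inverse.
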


\begin{proof}
 Let us denote: 
\begin{equation*}
    s_k^* \!=\! \arg \min_{\|s\| \leq \epsilon} \bar{\mathcal{P}}^q_\rho(x_k+s;x_k) \!=\! \arg \min_{\|s\| \leq \epsilon} f(x_k) + \langle\nabla f(x_k), s\rangle + \frac{\rho}{q}\|F(x_k)+J_F(x_k)s\|_q^q.
   \end{equation*}
Assume that we are in the case $\|s_k^*\| < \epsilon$. Then  the above problem is essentially unconstrained and convex, and first-order conditions provide that $\nabla \bar{\mathcal{P}}^q_\rho(x_k+s_k^*;x_k)=0$, and so there exists $\lambda_k = \rho\sign(F(x_k)+J_F(x_k)s_k^*) \circ|F(x_k)+J_F(x_k)s_k^*|^{q-1}$ such that $\nabla f(x_k) +  J_F(x_k)^T \lambda_k = 0$, which implies that \eqref{kktt1}  holds. 

\medskip 

\noindent It remains to consider $\|s_k^*\| = \epsilon$. Then first-order conditions for $s_k^*$ imply that there exist $\lambda_k = \rho\sign(F(x_k)+J_F(x_k)s_k^*)\circ |F(x_k)+J_F(x_k)s_k^*|^{q-1}$, $u_k^* \geq 0$, and $z_k^* \in \partial (\|s_k^*\|)$ such that
\begin{equation} \label{eqref11}
    \nabla f(x_k) + J_F(x_k)^T \lambda_k + u_k^*z_k^* = 0.
\end{equation}
It follows from the definition of $\Psi_r(x_k)$ that
\[
\begin{aligned}
\Psi_{\epsilon}(x_k) &= \bar{\mathcal{P}}^q_\rho(x_k;x_k) - \bar{\mathcal{P}}^q_\rho(x_k+s_k^*;x_k) \\
&= -\langle\nabla f(x_k), s_k^*\rangle + \frac{\rho}{q} \left(\|F(x_k)\|_q^q - \|F(x_k)+J_F(x_k)s_k^*\|_q^q\right),
\end{aligned}
\]
and replacing $\nabla f(x_k)$ from \eqref{eqref11} into the above, we deduce:
\begin{align}\label{eq3.6}
&\Psi_{\epsilon}(x_k) = \frac{\rho}{q} \left(\|F(x_k)\|_q^q - \| F(x_k)+J_F(x_k)s_k^*\|_q^q\right) + \langle s_k^*, J_F(x_k)^T \lambda_k \rangle + u_k^*\langle s_k^*, z_k^*\rangle \nonumber  \\
&= \frac{\rho}{q}\left(\|F(x_k)\|_q^q - \| F(x_k)+J_F(x_k)s_k^*\|_q^q\right) + \langle s_k^*, J_F(x_k)^T \lambda_k \rangle + u_k^*\times \epsilon, 
\end{align}
where we also used that $\langle s_k^*, z_k^*\rangle = \|s_k^*\|=\epsilon$. Let $\varphi(s) = \frac{\rho}{q}\|F(x_k)+J_F(x_k)s \|_q^q$, which is convex; then $\varphi(0) - \varphi(s_k^*) \geq (-s_k^*)^T J_F(x_k)^T \lambda_k$, where $\lambda_k =\rho\sign(F(x_k)+J_F(x_k)s_k^*) \circ|F(x_k)+J_F(x_k)s_k^*|^{q-1}$. We then deduce that:
\[
\frac{\rho}{q} \left(\|F(x_k)\|_q^q - \| F(x_k)+J_F(x_k)s_k^*\|_q^q\right) + (s_k^*)^T J_F(x_k)^T \lambda_k \geq 0,
\]
and thus from \eqref{eq3.6} and the fact that $\Psi_{\epsilon}(x_k) \leq \epsilon^2$, we have:
\[
\epsilon^2 \geq \Psi_{\epsilon}(x_k) \geq u_k^*\times\epsilon.
\]
From \eqref{eqref11} and $\|z_k^*\| = 1$, we deduce
\[
u_k^* = u_k^*\|z_k^*\| = \|\nabla f(x_k) + J_F(x_k)^T \lambda_k\|\leq \epsilon.
\]
Hence, \eqref{kktt1} holds with $\lambda_k = \rho \sign(F(x_k)+J_F(x_k)s_k^*) |F(x_k)+J_F(x_k)s_k^*|^{q-1}$.
 Now, let us consider a KKT point $x^*\in\mathcal{S}$. LICQ ensures the existence of a corresponding $y^*$ such that:
\[
\nabla f(x^*)+{J_F(x^*)}^Ty^*=0  \hspace{0.3cm}\text{and}\hspace{0.3cm} F(x^*)=0.
\]
Let us analyze how much $\lambda_k$ deviates from a Lagrange multiplier $y^*$.  
We have:
\[
y^*=-{J_F(x^*)}^{+}\nabla f(x^*).
\]
Moreover, considering:
\[
 \|\nabla f(x_k) + J_F(x_k)^T \lambda_k\|\leq \epsilon,
\]
it then follows that there exists a vector $d\in \mathbb{R}^n$ with $\|d\|\leq 1$ such that:
\[
\nabla f(x_k) + J_F(x_k)^T \lambda_k=\epsilon d.
\]
This implies:
\[
\lambda_k=-J_F(x_k)^+\nabla f(x_k)+ \epsilon J_F(x_k)^+d.
\]
Hence:
\begin{align*}
    \|\lambda_k-y^*\|&=\|-J_F(x_k)^+\nabla f(x_k)+J_F(x^*)^+\nabla f(x^*)+\epsilon J_F(x_k)^+d\|\\
    &\leq \|J_F(x^*)^+\nabla f(x^*)-J_F(x_k)^+\nabla f(x_k)\|+\epsilon\| J_F(x_k)^+d\|.
\end{align*}
Given the continuity of $J_F^+$ and $\nabla f$, along with the fact that $x_k$ and $x^*$ belong to the compact set $\mathcal{S}$ and that $\|d\|\leq 1$, we conclude that there exists a constant $M_1\geq 0$ such that:
\begin{align*}
    \|\lambda_k-y^*\|&\leq M_1.
\end{align*}
Then:
\[
  \|\lambda_k\|\leq M_1+\|y^*\|.
\]
Consequently:
\begin{equation}\label{feasiblity1}
    \|F(x_k)+J_F(x_k)s_k^*\|=\frac{\||\lambda_k|^{\frac{1}{q-1}}\|}{\rho^{\frac{1}{q-1}}}\leq \left(\frac{\|\lambda_k\|_q}{\rho}\right)^{\frac{1}{q-1}} \leq \mathcal{O}(\epsilon).
\end{equation}
It then follows that:
\begin{align*}
    \|F(x_k)\|&\leq \|F(x_k)+J_F(x_k)s_k^*\|+\|J_F(x_k)s_k^*\|\\
    &\leq \|F(x_k)+J_F(x_k)s_k^*\|+M_F\|s_k^*\|\leq\mathcal{O}(\epsilon)+M_F\|s_k^*\|.
\end{align*}
Moreover, since $\|s_k^*\|\leq \epsilon$, we get:
\[
 \|\nabla f(x_k) + J_F(x_k)^T \lambda_k\|\leq \epsilon \text{ and } \|F(x_k)\|\leq\mathcal{O}(\epsilon),
\]
after $K=\mathcal{O} \left(\frac{\rho^{\frac{1}{q}}}{\epsilon^2} \right)=\mathcal{O} \left(\frac{1}{\epsilon^{2+\frac{q-1}{q}}} \right)$ iterations. 
\end{proof}
\begin{remark}\label{remark_impo1}
In addition to the discussion in Remark \ref{remark_impo}, it is worth noting that, from the convergence rate to an $\epsilon$-first-order solution of problem \eqref{eq1} which is of order $\mathcal{O}\left(\frac{1}{\epsilon^{2+\frac{q-1}{q}}}\right)$ outer iterations {(i.e., number of functions, gradients and Jacobians evaluations)},  as \( q \) approaches values close to 1, the \red{convergence rate} of Algorithm \ref{alg1} becomes \(\mathcal{O}\left(\frac{1}{\epsilon^2}\right)\), which coincides with the standard \red{convergence rate} for exact penalty methods \cite{CarGou:11, NabNec:25}. On the other hand, when \( q = 2 \), the \red{convergence rate} becomes \(\mathcal{O}\left(\frac{1}{\epsilon^{2.5}}\right)\), matching the rate \red{of} quadratic penalty methods established {e.g.,} in \cite{ElbNec:25}.
\end{remark}

\begin{remark}\label{remark_total}
For the total complexity (inner and outer iterations), note that the subproblem in Step 5 of our algorithm is unconstrained with a strongly convex objective function whose gradient is locally Hölder continuous with exponent \( q - 1 \). According to \cite{DevGli:13}, solving this subproblem to an accuracy \( \epsilon_{\text{sub}} = \epsilon \) using an accelerated gradient method requires:
\[
\mathcal{O}\left(\frac{\rho^{\frac{2}{3q-2}}}{\ubar{\beta}^{\frac{q}{q-1}} \epsilon^{\frac{2-q}{3q-2}}}\right)
\]  
gradient steps {(i.e., products of the form Jacobian at $x_k$ times vectors)}, up to a logarithmic factor. By selecting \( \rho = \mathcal{O}\left(\frac{1}{\epsilon^{q-1}}\right) \) and \( \ubar{\beta} = \mathcal{O}\left(\rho^{\frac{1}{q}}\right) = \mathcal{O}\left(\frac{1}{\epsilon^{\frac{q-1}{q}}}\right) \), {the worst-case} complexity of solving the subproblem simplifies to  
$\mathcal{O}\left(\frac{1}{\epsilon^{\frac{1}{3q-2}}}\right).$  
Thus, the total complexity for obtaining an \( \epsilon \)-first-order solution of problem \eqref{eq1} is  
$\mathcal{O}\left(\frac{1}{\epsilon^{2+\frac{q-1}{q}+\frac{1}{3q-2}}}\right)$.
In particular, as \( q \to 1 \), this complexity approaches \( \mathcal{O}\left(\frac{1}{\epsilon^3}\right) \), while for \( q = 2 \), it reduces to \( \mathcal{O}\left(\frac{1}{\epsilon^{2.75}}\right) \). The optimal total complexity is \( \mathcal{O}\left(\frac{1}{\epsilon^{2.74}}\right) \) and it is achieved for \( q = 1 + \frac{1}{\sqrt{3}}\approx 1.33 \).   Paper  \cite{CarGou:11}, which employs a Lipschitz penalty function approach,  has a total complexity of order \(\mathcal{O}\left(\frac{1}{\epsilon^{3}}\right)\),  when employing the scheme in \cite{DevGli:13}, which is usually higher  than our total complexity for any $q\in(1,2]$.
\end{remark}

 \noindent  Thus, apart from its straightforward implementation, our algorithm holds a compelling  theoretical convergence guarantee. One of its prominent strengths is the avoidance of complex subroutines. Unlike \cite{CarGou:11}, where the subproblem is non-differentiable and \cite{LinMa:22}, where the subproblem is nonconvex, our method simplifies  to solving an unconstrained  differentiable strongly convex subproblem. With a global theoretical convergence assurance to reach an $\epsilon$-first-order solution in at most $\mathcal{O}(\frac{1}{\epsilon^{2+\frac{q-1}{q}}})$ outer iterations, our approach ensures the dependable exploration of optimal solutions for a wide spectrum of nonconvex optimization problems, even those with nonlinear constraints, a capability absent in \cite{KonMel:18}, which only handles linear equality constraints. These  properties  position our algorithm as an efficient and versatile tool in the realm of nonlinear optimization, making it an appealing choice for various practical applications.

%%%%%%%%%%%%%%%%%%%%%%%%%%%%%%%%%%%%%%%%%%%%%%%%

\subsection{Selection of the penalty parameter $\rho$}
\noindent The previous results, deriving the total number of iterations required to reach an $\epsilon-$ first order  solution to the problem, are based on the assumption that the penalty parameter $\rho$ exceeds a particular threshold. However, determining this threshold in advance presents difficulties, as it relies on unknown parameters associated with the problem functions and algorithm parameters.
To meet this challenge, we propose a methodology for determining a sufficiently large value of $\rho$ without requiring precise parameter information. Inspired by Algorithm 3 of \cite{XieWri:21}, our approach consists in iteratively calling Algorithm \ref{alg1} as an inner loop. If Algorithm \ref{alg1} fails to converge within a predefined number of iterations, we progressively increment the penalty parameter $\rho$ by a constant factor in the outer loop. A full description of this approach can be found in algorithm \ref{alg2}.
In some applications, such as model predictive control, it is necessary to handle a series of problems with a specified $\epsilon$ level of accuracy. In particular, the optimal penalty parameter for the initial optimization problem often remains effective for subsequent problems. This suggests that dedicating more computation resources to the initial optimization problem can result in substantial savings for the remaining optimization problems. This adaptive approach therefore guarantees the effectiveness and efficiency of our algorithm, even in scenarios where precise parameter information is not available. This adaptability makes our method a practical and robust solution for a wide range of optimization problems, potentially offering significant advantages in a variety of real world applications.

\begin{algorithm}
\caption{qLP method with trial value of $\rho$}\label{alg2}
\begin{algorithmic}[1]
\State  $\textbf{Initialization: } \tau>1, \epsilon>0 \text{ and } \rho_0>0;$
\State $k \gets 0$
\While{$\text{ Infeasible }$}
     \State $\rho_{k+1}\gets \tau \rho_k$
    \State Call Algorithm \ref{alg1} using warm start
    %\State $x_{k+1}\gets\argmin_{x}{\bar{\mathcal{P}}^q_{\rho}(x;x_{k})+\frac{\beta_{k+1}}{2}{\|x-x_{k}\|}^2}$\Comment{The subproblem}
    %\State $\lambda_{k+1}\gets\lambda_{k}+\rho \Big(F(x_{k})+J_F(x_{k})(x_{k+1}-x_{k})\Big)$  
    \State $k \gets k+1$
\EndWhile
\end{algorithmic}
\end{algorithm}

%%%%%%%%%%%%%%%%%%%%%%%%%%%%%%%%

\section{Numerical results}\label{sec5}
\noindent In this section, we conduct a numerical comparison of Algorithm \ref{alg1} (qLP) for various values of $q$, specifically, for $q=1.001, q=1.5,$ and $q=2$, as well as with Algorithm 2.1 from \cite{CarGou:11}. These comparisons are carried out on nonconvex optimization problems with nonlinear equality constraints selected from the CUTEst collection. The simulations are implemented in Python and executed on a PC with a CPU running at 2.70GHz and equipped with 16GB of RAM. For the implementation of our method, we employ the following stopping criteria: we terminate the algorithms when the difference between two consecutive values of the objective function falls below a tolerance of $\epsilon_1=10^{-3}$, and the norm of the constraints is less than $\epsilon_2=10^{-5}$. To assess the sensitivity of our algorithm to parameters choice \( \rho \) and \( \beta \), we test several values for these parameters. We choose for $\rho$ three different values for each problem and then some corresponding $\beta$. If an algorithm does not solve a specific problem  in 30 min., we consider that problem unsolved by the algorithm and mark the corresponding entry in Table 1 as "-". The results are presented in Table \ref{tab1}.

\begin{table}
\small
{
    
      \centering
  %\caption{Add caption}
% \begin{sidewaystable}
%\sidewaystablefn%
%\begin{adjustbox}{angle=0}    
\begin{adjustbox}{width=\columnwidth,center}

    \begin{tabular}{|c|c|cc|cc|cc|cc|}
    \hline
   \multirow{3}{*}{(n,m)}& \multirow{3}{*}{\backslashbox{$(\rho,\beta)$}{Alg}}
     & \multicolumn{2}{c|}{$q=1.001$} &
     \multicolumn{2}{c|}{$q=1.5$} &
      \multicolumn{2}{c|}{$q=2$} &
      \multicolumn{2}{c|}{Alg. 2.1 \cite{CarGou:11}} \\ \cline{3-10}
              && \# iter     & cpu  
          & \# iter    & cpu &
           \# iter     & cpu &  
           \# iter     & cpu \\ %\cdashline{2-9}
        & & $f^*$    & $\|F\|$  
          & $f^*$    & $\|F\|$ &
           $f^*$    & $\|F\|$  &
           $f^*$    & $\|F\|$   \\
    \hline
   % \multirow{5}[10]{*}{\begin{sideways}samples\end{sideways}} & 1     & Value & Value & Value & Value & Value \bigstrut\\
  
 \multirow{3}{*}{DTOC4}
       & $\rho=10^2$ &  3 &  \textbf{2.84} 
    & - &  - &  
      - & - &
       3 & 4.86\\% \cdashline{2-9}
    &$\beta=1$& 2.95 & 1.76e-06
      &- & - &  
      -& - &
      2.95& 3.65e-07\\ \cline{2-10}
         &$\rho=10^5$&  4 &  9.44 
    &4 &  11.56 &  
      - & - &
       3 & 12.32\\% \cdashline{2-9}
       \multirow{3}{*}{(147,98)}
    &$\beta=5$& 2.95 & 2.3e-08
      &2.95 & 5.09e-07 &  
      -& - &
      2.95 & 1.29e-07\\ \cline{2-10}
        &$\rho=10^7$ &  3 &  9.21
    &5 &  32.22 &  
      13 & 3.15 &
      4 & 14.88\\% \cdashline{2-9}
    &$\beta=10$& 2.95 & 1.45e-08
      &2.95 & 5.85e-07 &  
      2.95& 2.46e-06 &
      2.95 & 9.87e-08\\\hline
      
\multirow{3}{*}{DTOC4 }
 & $\rho=10^3$&  3 &  5.02 
    &- &  - &  
      - & - &
      6 & 5.89 \\% \cdashline{2-9}
    &$\beta=1$& 2.88 & 3.7e-06
      &- & - &  
      -& - &
       2.88 & 1.38e-06\\ \cline{2-10}
       & $\rho=10^5$ &  3 &  38.63
    &3 &  65.21 &  
      - & - &
       8 & 35.07\\% \cdashline{2-9}
      \multirow{3}{*}{(297,198)}
    &$\beta=5$& 2.88 & 3.69e-08
      &2.88 & 3.35e-08 &  
      -& - &
      2.88 & 3.64e-08 \\ \cline{2-10}
    & $\rho=10^7$ &  3 &  72.19
    &3 &  98.56 &  
      7 &  \textbf{3.47} &
      8 & 65.54\\% \cdashline{2-9}
    &$\beta=10$& 2.88 & 9.43e-08
      &2.88 & 1.48e-07 &  
      2.88& 8.26e-06 &
      2.88 & 1.33e-08\\\hline

    %\shortstack{(998,499)\\$\rho=1e9$}
    \multirow{3}{*}{DTOC5}
       & $\rho=10^2$ &  9 &  \textbf{5.83}
    &- &  - &  
      - & - &
       29 & 20.06\\% \cdashline{2-9}
       & $\beta=1$& 1.53 & 5.11e-06
      &- & - &  
      -& - &
      1.54 & 7.09e-07\\ \cline{2-10}
            & $\rho=10^5$&  9 &  34.52
    &7 &  23.88 &  
      - & - &
       25 & 44.81 \\% \cdashline{2-9}
        \multirow{3}{*}{(98,49)}
       & $\beta=5$ & 1.53 & 3.85e-07
      &1.53 & 2.04e-06 &  
      -& - &
       1.53 & 2.22e-07 \\ \cline{2-10}
    & $\rho=10^7$ &  8 &  63.35
    &9 &  32.11 &  
      17 & 6.12 &
      26 & 65.80\\% \cdashline{2-9}
       & $\beta=10$ & 1.53 & 3.42e-08
      &1.53 & 5.00e-07 &  
      1.53& 7.76e-06 &
      1.53 & 3.89e-08\\\hline
      
   %\shortstack{ (998,499)\\$\rho=2e4$}
   \multirow{3}{*}{DTOC5 }
     & $\rho=10^2$&  6 & 12.59
        &- &  - &  
      - & - &
       10 & 11.65\\% \cdashline{2-9}
     & $\beta=1$ & 1.53 & 7.05e-06
      &- & - &  
      - & - &
      1.56 & 1.03e-07\\ \cline{2-10}
           & $\rho=10^5$ &  6 &  51.53
        &5 &  41.62 &  
      - & - &
       10& 36.07\\% \cdashline{2-9}
      \multirow{3}{*}{(198,99)}
     & $\beta=5$ & 1.53 & 7.73e-07
      &1.53 & 5.64e-06 &  
      -& - &
      1.53 & 7.51e-08\\ \cline{2-10}
   & $\rho=10^7$ &  4 &  103.27
        &23 &  143.79 &  
      21 &   \textbf{7.33} &
       11 & 87.34\\% \cdashline{2-9}
     & $\beta=10$ & 1.53 & 3.18e-08 
      &1.53 & 6.04e-07 &  
      1.53& 5.88e-06 &
      1.53 &2.68e-08 \\\hline

   %\shortstack{ (998,499)\\$\rho=2e4$}
   \multirow{3}{*}{DTOC6}
   &$\rho=10^3$ &  15 &  14.34
        &- &  - &  
      - & - &
      16 & 16.46\\% \cdashline{2-9}
     & $\beta=4$& 727.98 & 2.90-07
      &- & - &  
      -& - & 
      727.98&4.22e-07 \\ \cline{2-10}
       &$\rho=10^6$  &  15 &  20.25
        &22 &  59.24 &  
      - & - &
      20 & 42.97\\% \cdashline{2-9}
      \multirow{3}{*}{(140,60)}
     & $\beta=10$& 727.98 & 2.99-07
      &727.98 & 1.88e-06 &  
      -& - & 
      727.98& 8.73e-08\\ \cline{2-10}
   &$\rho=10^9$ &  15 &  79.40
        & 12 &  42.75 &  
      21 &  \textbf{9.26}&
      17 & 81.58 \\% \cdashline{2-9}
     & $\beta=50$ & 727.98 & 3.16-08
      &727.98 & 7.06e-07 &  
      727.98& 1.23e-06 & 
      727.98 & 4.21e-08 \\\hline
      
    %\shortstack{ DTOC5\\(9998,4999)}
    \multirow{3}{*}{DTOC6 }
      &$\rho=10^4$&  18 & 17.82 
    &- &  - &  
      - & - &
      34  & 23.67\\% \cdashline{2-9}
    &$\beta=4$& 6846.61 & 5.23e-07
      &- & - &  
      - & - &
     6846.61 & 8.95e-06\\ \cline{2-10}
          &$\rho=10^7$&  18 &  61.77
    &23 &  73.53 &  
      - & - &
       33 & 57.12\\% \cdashline{2-9}
       \multirow{3}{*}{(200,100)}
   & $\beta=20$& 6846.61 & 7.95e-08
      &6846.61 & 1.46e-06 &  
      - & - &
      6846.61 & 3.42e-06 \\ \cline{2-10}
    &$\rho=10^{10}$&  18 &  112.34
    & 22 &  123.15 &  
      36 &  \textbf{11.86 } &
       33 & 95.74\\% \cdashline{2-9}
    &$\beta=50$& 6846.61 & 2.55e-08
      & 6846.61 & 4.66e-07 &  
      6846.61 & 5.24e-06 &
      6846.61 &7.27e-07 \\\hline

   %\shortstack{ORTHREGA\\(517,256)}
   \multirow{3}{*}{ORTHREGA }
    & $\rho=10^2$&  12 &  \textbf{7.03}
    & - &  - &  
      - & - &
       - & -\\% \cdashline{2-9}
    &$\beta=1$& 414.53 & 1.93e-06
      &- & - &  
      -& - &
       -& -\\\cline{2-10}
         & $\rho=10^5$&  10 &  74.88
    & 46 &  51.63 &  
      - & - &
       17 & 27.11\\% \cdashline{2-9}
       \multirow{3}{*}{(133,64)}
    &$\beta=5$& 414.53 & 3.08e-07
      &414.53 & 1.25e-06 &  
      -& - &
      414.53 & 8.48e-06 \\\cline{2-10}
   & $\rho=10^7$&  13 &  96.32
    & 46 &  129.09 &  
      47 & 7.40 &
       19 & 73.83\\% \cdashline{2-9}
    &$\beta=10$& 414.53 & 7.24e-08
      &414.53 & 5.42e-07 &  
      414.53 & 6.19e-06 &
      414.53 & 6.96e-07\\\hline
      
     %\shortstack{ORTHREGA\\(2053,1024)}
     \multirow{3}{*}{ORTHREGA}
       & $\rho=10^2$ &  27 &  23.99
    &- &  - &  
      - & - &
       - & - \\% \cdashline{2-9}
    &$\beta=1$ & 1664.80 & 1.91e-06
      &- & - &  
      -& - &
       - & - \\ \cline{2-10}
     & $\rho=10^5$&  25 &  83.31
    &49 &  112.57 &  
      - & - &
      42 & 78.39\\% \cdashline{2-9}
       \multirow{3}{*}{(517,256)}
    &$\beta=5$ & 1664.80 & 3.19e-07
      &1664.80 & 2.07e-06 &  
      - & - &
      1664.80 & 3.67e-06\\ \cline{2-10}
   & $\rho=10^8$ &  27 &  107.78
    & 49 &  153.02&  
      103 & \textbf{14.24} &
      41 & 124.16\\% \cdashline{2-9}
    &$\beta=10$ & 1664.80 & 4.87e-08
      &1664.80 & 7.63e-07 &  
      1664.80& 9.25e-06 &
        1664.80& 5.04e-07  \\
      
      %\shortstack{ORTHREGA \\ (8197,4096)}

       \hline
   % \multirow{5}[10]{*}{\begin{sideways}samples\end{sideways}} & 1     & Value & Value & Value & Value & Value \bigstrut\\
      
    \end{tabular}%
    \end{adjustbox}
}
\caption{Comparing the performance of qLP with different values of $q$ (i.e., $q=1.001, q=1.5, q=2$) and Algorithm 2.1 from \cite{CarGou:11} in solving various problems from the CUTEst collection.}
\label{tab1}
\end{table}

\medskip 

\noindent Table 1 provides the number of iterations, CPU time (in seconds), objective values, and feasibility violations for several test cases.
It's worth noting that when testing our algorithm on problems from CUTEst, as $q$ approaches 1, the penalty parameter for qLP tends to converge to a finite value. This observation validates our theory. In fact, when $q$ is very close to 1, as in the case of $q=1.001$, we may discover a finite penalty parameter $\rho$ that can ensure feasibility. This parameter may be smaller than the penalty parameter $\rho$ required for feasibility in the case of the exact penalty method in \cite{CarGou:11}. An illustrative example of this behavior can be seen in the problem ORTHREGA, from Table \ref{tab1}, where for $\rho=10^2$, Algorithm 2.1 from \cite{CarGou:11} struggles to find a feasible solution. This is because, as $q$ approaches 1, the norm $\|\cdot\|_q^q$ (utilized in qLP) becomes significantly sharper than $\|\cdot\|$ (used in Algorithm 2.1 from \cite{CarGou:11}). Furthermore, qLP remains the most efficient in terms of number of iterations when $q$ is close to 1 (and sometimes also in terms of CPU time), provided that the optimal penalty parameter $\rho$ is selected. One can see that our method is robust with respect to $q$. Typically, the best $\rho$ is the smallest one that ensures feasibility.

%%%%%%%%%%%%%

\section{Conclusions}
\label{sec6}
\noindent In this paper, we have introduced a new approach called the $\ell_q$ linearized penalty method (qLP) to deal with optimization problems involving nonconvex objective and nonlinear equality constraints, in particular those exhibiting local smoothing. This method involves linearizing the cost function and functional constraints within the $\ell_q$ penalty function with a regularization term. By dynamically determining the (proximal) regularization parameter, we establish the convergence rate to reach an $\epsilon$-first-order  optimal solution.
In addition, we conduct numerical experiments to demonstrate the effectiveness of the proposed algorithm. 

%\vspace*{-0.2cm}
%\noindent In summary, our qLP method is a powerful and versatile tool for efficiently solving smooth nonconvex problems involving nonlinear equality constraints. It effectively overcomes the limitations of existing methods, as validated by numerical simulations. \\

%%%%%%%%%%%%%%%%%

\section*{Conflict of interest}
The authors declare that they have no conflict of interest.

\section*{Data availability}
It is not applicable.

\section*{Acknowledgments}
\noindent The research leading to these results has received funding from: the European Union's Horizon 2020 research and innovation programme under the Marie Sklodowska-Curie grant agreement no. 953348.

%%%%%%%%%%%%%%%%%%%%%%%%%%

%\appendix

\section*{Appendix}
%\begin{proof} \textit{Lemma} \ref{lemma2}
\noindent \textbf{Proof of Lemma \ref{holder}}
Using basic operations, we have:
\begin{align*}
   |\sign(x)|x|^{\nu}-\sign(y)|y|^{\nu}| &= |(\sign(x)-\sign(y))|x|^{\nu}+\sign(y)(|x|^{\nu}-|y|^{\nu})| \\
   &\leq |\sign(x)-\sign(y)||x|^{\nu}+|\sign(y)|||x|^{\nu}-|y|^{\nu}|
\end{align*}
Now, let us first prove that:
\[
|\sign(x)-\sign(y)||x|^{\nu}\leq 2|y-x|^{\nu}.
\]
To do that, we will distinguish the following cases:

\noindent \textbf{Case 1:} $x=0$ or $y=0$ or both $x$ and $y$ have the same sign; this is an obvious case.

\noindent \textbf{Case 2:} $x$ and $y$ have different signs:
\[
|\sign(x)-\sign(y)||x|^{\nu}= 2|x|^{\nu}.
\]
Since $|\cdot|^{\nu}$ is a nondecreasing function on $[0,\infty)$ for any $\nu>0$ and since $0\leq|x|\leq|y-x|$ (keeping in mind that $x$ and $y$ have different signs), then we have:
\begin{equation}\label{nice_inequality}
|x|^{\nu}\leq|y-x|^{\nu}    
\end{equation}
Hence, we get:
\[
|\sign(x)-\sign(y)||x|^{\nu}\leq 2|y-x|^{\nu}.
\]
Now we need to prove that:
\[
|\sign(y)| \cdot ||x|^{\nu}-|y|^{\nu}|=||x|^{\nu}-|y|^{\nu}|\leq  |y-x|^{\nu}.
\]
As before, we need to separate the following cases:\\
\noindent \textbf{Case 2.1:} $x=0$ or $y=0$ or both $x$ or $x=y$; this is an obvious case.

\noindent \textbf{Case 2.2:} $x$ and $y$ have different signs:
\[
|x-y|\geq\max\{|x|,|y|\}>0.
\]
Since $|\cdot|^{\nu}$ is symmetric with respect to the vertical axis for any $\nu>0$ and it is nondecreasing on $[0,\infty)$, hence:
\[
|x-y|^{\nu}\geq\max\{|x|^{\nu},|y|^{\nu}\}\geq\max\{|x|^{\nu},|y|^{\nu}\}-\min\{|x|^{\nu},|y|^{\nu}\}=||x|^{\nu}-|y|^{\nu}|.
\]

\noindent \textbf{Case 3:} $x$ and $y$ have the same sign:
Since $|\cdot|^{\nu}$ is symmetric for any $\nu>0$ and it is concave on the intervals $(-\infty,0]$ and $[0,+\infty)$, then with the loss of generality, we can assume that $x>y>0$. Now by applying the concavity of $|\cdot|^{\nu}$ on $[0,\infty)$, it follows that:
\begin{align}\label{x_eq}
    |x|^{\nu}&=|\frac{|y|}{|x|+|y|}\times0+\frac{|x|}{|x|+|y|}\times|x+y||^{\nu}\nonumber\\
    &\geq \frac{|y|}{|x|+|y|} \times |0|^{\nu}+\frac{|x|}{|x|+|y|}\times|x+y|^{\nu}\nonumber\\
    &=\frac{|x|}{|x|+|y|}\times|x+y|^{\nu}.
\end{align}
Similarly, we have:
\begin{align}\label{y_eq}
    |y|^{\nu}&=|\frac{|x|}{|x|+|y|}\times0+\frac{|y|}{|x|+|y|}\times|x+y||^{\nu}\nonumber\\
    &\geq \frac{|x|}{|x|+|y|} \times |0|^{\nu}+\frac{|y|}{|x|+|y|}\times|x+y|^{\nu}\nonumber\\
    &=\frac{|y|}{|x|+|y|}\times|x+y|^{\nu}.
\end{align}
Summing up \eqref{x_eq} and \eqref{y_eq} results in the following:
\begin{equation}\label{concave}
    |x|^{\nu}+|y|^{\nu}\geq \frac{|x|}{|x|+|y|}\times|x+y|^{\nu} +\frac{|y|}{|x|+|y|}\times|x+y|^{\nu}=|x+y|^{\nu}.
\end{equation}
Now, we have:
\[
|x|^{\nu}=|x-y+y|^{\nu}
\]
and since $x-y>0$ and $y>0$, it follows from \eqref{concave} that:
\[
|x|^{\nu}=|x-y+y|^{\nu}\leq |x-y|^{\nu}+|y|^{\nu}.
\]
Hence from all cases, we have for any $x,y\in\mathbb{R}$:
\[
||x|^{\nu}-|y|^{\nu}|\leq |x-y|^{\nu}.
\]
Finally, we can write:
\[
 |\sign(x)|x|^{\nu}-\sign(y)|y|^{\nu}|\leq 3|y-x|^{\nu}.
\]
Which concludes our proof.\qed

\medskip
\noindent \textbf{Proof of Lemma \ref{yuan85}}
Starting from the definition of $\Psi(\cdot)$ in \eqref{eq:Psi}, we can express it as follows:
\[
\Psi(x) =  \bar{\mathcal{P}}^q_{\rho}(x;x) - \min_{\|s\|\leq 1}  \bar{\mathcal{P}}^q_{\rho}(x+s;x)= \max_{\|s\|\leq 1}  \{ \bar{\mathcal{P}}^q_{\rho}(x;x) - \bar{\mathcal{P}}^q_{\rho}(x+s;x)\}
\]
Since for $s=0$, we have:
\[
\bar{\mathcal{P}}^q_{\rho}(x;x) - \bar{\mathcal{P}}^q_{\rho}(x+s;x)=0,
\]
and hence:
\[
\Psi(x) = \max_{\|s\|\leq 1}  \{ \bar{\mathcal{P}}^q_{\rho}(x;x) - \bar{\mathcal{P}}^q_{\rho}(x+s;x)\}\geq 0.
\]
Now, let's consider the case where $\Psi(x) = 0$. It then follows that:
\[
0\in\arg \max_{\|s\|\leq 1}\{ \bar{\mathcal{P}}^q_{\rho}(x;x) - \bar{\mathcal{P}}^q_{\rho}(x+s;x)\}.
\]
Since $s=0$ is in the interior of the ball defined by $\{s\in\mathbb{R}^n\;:\; \|s\|\leq1\}$ and because  $ \bar{\mathcal{P}}^q_{\rho}(x;x) - \bar{\mathcal{P}}^q_{\rho}(x+s;x)$ is differentiable with respect to $s$, then:
\[
\nabla_s\left(\bar{\mathcal{P}}^q_{\rho}(x;x) - \bar{\mathcal{P}}^q_{\rho}(x+s;x)\right)\Big|_{s=0}=0
\]
Or, equivalently
\[
\nabla_s\left(\bar{\mathcal{P}}^q_{\rho}(x;x) - \bar{\mathcal{P}}^q_{\rho}(x+s;x)\right)\Big|_{s=0}=\nabla f(x)+\rho J_F(x)^T \sign(F(x))\circ|F(x)|^{q-1}=0.
\]
Hence, it follows that $x$ is a critical  point of the penalty function $\mathcal{P}^q_{\rho}(\cdot)$.
Which concludes our proof.\qed

\medskip

\noindent {\textbf{Proof of Lemma \ref{lemma3}}.
%\begin{proof}
 Note that the subproblem's objective function $x \mapsto \bar{\mathcal{P}}^{q}_{\rho}(\cdot;x_k)+\frac{\beta_{k+1}}{2}\|x-x_k\|^2$ is strongly convex with strong convexity constant $\beta_{k+1}$. Combining this with  the optimality of $x_{k+1}$  and the fact that $\bar{\mathcal{P}}^q_{\rho}(x_{k};x_k)=\mathcal{P}^q_{\rho}(x_{k})$, we get: 
\begin{equation} \label{initial_decrease}
 \bar{\mathcal{P}}^q_{\rho}(x_{k+1};x_k)\leq \mathcal{P}^q_{\rho}(x_{k}) -\beta_{k+1}\|x_{k+1}-x_k\|^2.
\end{equation}

\vspace{-0.1cm}

\noindent Further, since $f$ has a Lipschitz gradient, we have the following.

\vspace{-0.3cm}

\begin{equation}\label{smooth_object}
f(x_{k+1}) - l_f(x_{k+1};x_k) \leq \frac{L_f}{2}\|x_{k+1}-x_k\|^2. 
\end{equation}

\vspace{-0.1cm}

\noindent Moreover, using the fact that the gradient of $\frac{\rho}{q}\|\cdot\|^q_q$ is  Holder continuous, we obtain: 
\begin{align}
   &\frac{\rho}{q}\|F(x_{k+1})\|^q_q - \frac{\rho}{q}\|l_F(x_{k+1};x_k)\|^q_q \nonumber\\
   & \overset{\eqref{holder_inequality}}{\leq}   \langle \rho \sign{(l_F(x_{k+1};x_k))}\circ|l_F(x_{k+1};x_k)|^{q-1}, F(x_{k+1})-l_F(x_{k+1};x_k) \rangle \nonumber\\
   & \quad + \frac{3\times m^{\frac{2-q}{2}}\rho}{q}\|F(x_{k+1})-l_F(x_{k+1};x_k)\|^q\nonumber\\
    & \leq   \langle \rho |l_F(x_{k+1};x_k)|^{q-1}, |F(x_{k+1})-l_F(x_{k+1};x_k)| \rangle  \nonumber\\
    & \quad + \frac{3\times m^{\frac{2-q}{2}}\rho}{q}\|F(x_{k+1})-l_F(x_{k+1};x_k)\|^q \nonumber\\
    & = \rho \left(\sum_{i=1}^{m}|l_{f_i}(x_{k+1};x_k)|^{q-1}\times |f_i(x_{k+1})-l_{f_i}(x_{k+1};x_k)|\right)^{q\times\frac{1}{q}}  \nonumber\\
   & \quad + \frac{3\times m^{\frac{2-q}{2}}\rho}{q}\|F(x_{k+1})-l_F(x_{k+1};x_k)\|^q\nonumber\\
     & \leq \rho \left(\sum_{i=1}^{m}|l_{f_i}(x_{k+1};x_k)|^{q}\right)^{\frac{q-1}{q}}\times \left( \sum_{i=1}^{m}|f_i(x_{k+1})-l_{f_i}(x_{k+1};x_k)|^q\right)^{\frac{1}{q}}  \nonumber\\
   & \quad + \frac{3\times m^{\frac{2-q}{2}}\rho}{q}\|F(x_{k+1})-l_F(x_{k+1};x_k)\|^q\nonumber\\
   & =    \rho \|l_F(x_{k+1};x_k)\|^{q-1}_q  \|F(x_{k+1})-l_F(x_{k+1};x_k) \|_q  \nonumber\\
   & \quad + \frac{3\times m^{\frac{2-q}{2}}\rho}{q}\|F(x_{k+1})-l_F(x_{k+1};x_k)\|^q \nonumber\\
   & \leq  \rho \|l_F(x_{k+1};x_k)\|^{q-1}_q \times m^{\frac{2-q}{2q}} \times \|F(x_{k+1})-l_F(x_{k+1};x_k) \|  \nonumber\\
   & \quad + \frac{3\times m^{\frac{2-q}{2}}\rho}{q}\|F(x_{k+1})-l_F(x_{k+1};x_k)\|^q \nonumber \\
    &\overset{\text{Ass. } \ref{assump2}}{\leq} \textstyle 
 \rho \|l_F(x_{k+1};x_k)\|_q^{q-1} \frac{m^{\frac{2-q}{2q}}\times L_F }{2}\|\Delta x_{k+1}\|^2 + \frac{3\times m^{\frac{2-q}{2}}\rho}{q} \left(\frac{L_F}{2}\|\Delta x_{k+1}\|^2\right)^q,  \label{decrease_feasib}
\end{align}
where the third inequality follows using Holder inequality, i.e., for any $x,y\in\mathbb{R}^m$ and for any 
\( (r, s) \in \mathbb{R}_{+} \):
\[
\left( \sum_{i=1}^{m} |x_i|^r |y_i|^s \right)^{r+s} 
\leq 
\left( \sum_{i=1}^{m} |x_i|^{r+s} \right)^r 
\left( \sum_{i=1}^{m} |y_i|^{r+s} \right)^s,
\]
and the forth inequality follows from the fact that $\|v\|_2\leq \|v\|_q \leq m^{\frac{2-q}{2q}}\|v\|_2, \quad \forall q\in (1,2]$ and $v\in\mathbb{R}^m$.
\noindent  Furthermore, we now apply Young's inequality for products, which states that for any  
\( r, s \in (1, \infty) \) satisfying the conjugate relation  $\frac{1}{r} + \frac{1}{s} = 1,$
the following inequality holds for all nonnegative scalars \( a, b \):  
\[
ab \leq \frac{a^r}{r} + \frac{b^s}{s}.
\]  
Using this for $r=q$ and $s=\frac{q}{q-1}$, we bound \( \rho \|l_F(x_{k+1};x_k)\|_q^{q-1} \) as follows:

%\vspace{-0.3cm}

\begin{align}
&\rho \|l_F(x_{k+1};x_k)\|_q^{q-1} - \frac{q-1}{q}\frac{ \beta_{k+1} - L_f}{L_F} \nonumber\\
&\leq\frac{\rho}{q^{2-q}}  \left(\frac{L_F}{ \beta_{k+1} - L_f}\right)^{q-1}  \left( \frac{\rho}{q} \| l_F(x_{k+1};x_k)\|_q^q\right)^{q-1}  \nonumber\\
&= \frac{\rho}{q^{2-q}}  \left(\frac{L_F}{ \beta_{k+1} - L_f}\right)^{q-1} \left( \bar{\mathcal{P}}^q_{\rho} (x_{k+1};x_k) - f(x_{k+1}) + f(x_{k+1}) - l_f(x_{k+1};x_k)  \right)^{q-1}  \nonumber\\
& \overset{\eqref{initial_decrease}, \eqref{smooth_object}}{\leq }\frac{\rho}{q^{2-q}}  \left(\frac{L_F}{ \beta_{k+1} - L_f}\right)^{q-1} \left( \mathcal{P}^q_{\rho}(x_k) - f(x_{k+1}) - \frac{2\beta_{k+1} - L_f}{2}\|\Delta x_{k+1}\|^2 \right)^{q-1}  \nonumber\\
& \leq \frac{\rho}{q^{2-q}}  \left(\frac{L_F}{ \beta_{k+1} - L_f}\right)^{q-1} \left( \mathcal{P}^q_{\rho}(x_k) - f(x_{k+1}) - \left(\beta_{k+1} - L_f\right)\|\Delta x_{k+1}\|^2 \right)^{q-1} \nonumber\\
&\overset{(f(x_{k+1})\geq \bar{f})}{\leq} \frac{\rho}{q^{2-q}}  \left(\frac{L_F}{ \beta_{k+1} - L_f}\right)^{q-1} \left( \left(\mathcal{P}^q_{\rho}(x_k) - \bar{f}\right) - \left(\beta_{k+1} - L_f\right)\|\Delta x_{k+1}\|^2 \right)^{q-1}  \nonumber \\
&\resizebox{\textwidth}{!}{$\leq \frac{3\times m^{\frac{(2-q)(q-1)}{2q}}\rho}{q^{2-q}}  \left(\frac{L_F}{ \beta_{k+1} - L_f}\right)^{q-1} \left( \left(\mathcal{P}^q_{\rho}(x_k) - \bar{f}\right) - \left(\beta_{k+1} - L_f\right)\|\Delta x_{k+1}\|^2 \right)^{q-1}$}  \nonumber \\
&\resizebox{\textwidth}{!}{$\leq \frac{3\times m^{\frac{(2-q)(q-1)}{2q}}\rho}{q^{2-q}}  \left(\frac{L_F}{ \beta_{k+1} - L_f}\right)^{q-1} \left(2^{2-q}\left( \mathcal{P}^q_{\rho}(x_k) - \bar{f} \right)^{q-1} - \left(\beta_{k+1} - L_f\right)^{q-1}\|\Delta x_{k+1}\|^{2(q-1)}\right)$}  \nonumber \\
&\leq \frac{3\times m^{\frac{(2-q)(q-1)}{2q}}\times2^{2-q}\rho}{q^{2-q}}  \left(\frac{L_F}{ \beta_{k+1} - L_f}\right)^{q-1} \left( \mathcal{P}^q_{\rho}(x_k) - \bar{f} \right)^{q-1} \nonumber\\
& \quad- \frac{3\times m^{\frac{(2-q)(q-1)}{2q}}\rho L_F^{q-1}}{q^{2-q}}\|\Delta x_{k+1}\|^{2(q-1)} \nonumber \\
& \overset{\eqref{eq_assu}}{\leq} \frac{\beta_{k+1}-L_f}{q L_F} - \frac{3\times m^{\frac{(2-q)(q-1)}{2q}}\rho L_F^{q-1}}{q^{2-q}}\|\Delta x_{k+1}\|^{2(q-1)}, \label{bound_grad_norm_squared}
\end{align}
where, the sixth inequality follows from the fact that for any \( a \geq b \geq 0 \) and any \( \nu \in (0,1] \), the following holds:  $(a-b)^{\nu} \leq 2^{1-\nu} a^{\nu} - b^{\nu}$,  where we set \( \nu = q-1 \in (0,1] \).
\noindent Using \eqref{bound_grad_norm_squared} in \eqref{decrease_feasib}, we get:
\begin{align}
  &\frac{\rho}{q}\|F(x_{k+1})\|^q_q - \frac{\rho}{q}\|l_F(x_{k+1};x_k)\|^q_q  \nonumber\\
  & \leq   \frac{\beta_{k+1}-L_f}{2} \|\Delta x_{k+1}\|^2 - 3\times m^{\frac{2-q}{2}}\rho L_F^q\left(\frac{1}{2 q^{2-q}} - \frac{1}{q 2^q} \right) \|\Delta x_{k+1}\|^4 \nonumber\\
  & \leq  \frac{\beta_{k+1}-L_f}{2} \|\Delta x_{k+1}\|^2. \label{to_use_next}
\end{align}
Moreover, we have:
\begin{align*} 
   & \mathcal{P}^q_{\rho}(x_{k+1}) - \bar{\mathcal{P}}^q_{\rho}(x_{k+1};x_k)\\
   &=f(x_{k+1}) - l_f(x_{k+1};x_k) + \frac{\rho}{q}\|F(x_{k+1})\|^q_q - \frac{\rho}{q}\|l_F(x_{k+1};x_k)\|^q_q.
\end{align*}
Using  \eqref{smooth_object} and \eqref{to_use_next} in the previous relation, it follows  (inequality \eqref{smoothness}):
\begin{equation*} 
     \mathcal{P}^q_{\rho}(x_{k+1}) \leq \bar{\mathcal{P}}^q_{\rho}(x_{k+1};x_k) + \frac{\beta_{k+1}}{2} \|\Delta x_{k+1}\|^2.
\end{equation*}
Finally, using \eqref{initial_decrease}, we get the decrease in \eqref{eq_descent}.
%\[
% \mathcal{P}^q_{\rho}(x_{k+1}) \leq  \mathcal{P}^q_{\rho}(x_{k}) - \frac{\beta_{k+1}}{2} \|\Delta x_{k+1}\|^2.
%\]
This proves our statement.\qed
}

\medskip

\noindent \textbf{Proof of Lemma \ref{bounded_regularization}}
Using the definition of $r_k$, $\bar{\Psi}(x_k, \beta_{k+1}) > 0$, and $\mathcal{P}^q_{\rho}(x_k) = \bar{\mathcal{P}}^q_{\rho}\left(x_k;x_{k}\right)$, it follows that $r_k \geq 1$, provided
\begin{equation*}
%\label{eq:Dk}
\mathcal{P}^q_{\rho}(x_{k+1}) - \bar{\mathcal{P}}^q_{\rho}\left(x_{k+1};x_{k}\right)-\frac{\beta_{k+1}}{2} \|x_{k+1}-x_k\|^2 \leq 0.
\end{equation*}
This inequality is guaranteed to hold by Algorithm \ref{alg1}, thereby completing the proof. \qed
%\end{proof}
%\end{appendices}

%\end{proof}
%\end{appendices}

%\end{proof}
\medskip

\noindent \textbf{Proof of Lemma \ref{bbound}}
%\begin{proof}
We prove this result using induction arguments. Since $x_0,x_1\in\mathcal{S}$, then from Lemma \ref{lemma3} for $k=0$, it follows that: 
\begin{align}\label{ine111}
    f(x_1)+\frac{\rho}{q}\|F(x_1)\|_q^q+\frac{\beta_1}{2}\|x_1-x_0\|^2{\overset{\eqref{eq_descent}}{\leq}}\hspace{0cm} f(x_0)+\frac{\rho}{q}\|F(x_0)\|_q^q{\overset{{\eqref{ine10}}}{\leq}}\bar{\alpha}+c_0.
\end{align}
 Furthermore, exploiting the definition of $P_{k}$ for $k=1$, we have: 
\begin{align}\label{ine110}
\ubar{P}&\leq f(x_1)+\frac{\rho_0}{q}\|F(x_1)\|_q^q \nonumber\\
& {\overset{{(\rho\geq(q+1)\rho_0)}}{\leq}} P_1=\mathcal{P}^q_{\rho}(x_1)=f(x_1)+\frac{\rho}{q}\|F(x_1)\|_q^q{\overset{{\eqref{ine111}}}{\leq}}\bar{\alpha}+c_0=\bar{P}.
\end{align}
It then follows that for $k=1$, \eqref{important} is  verified. Now, assume that \eqref{important} holds for some $k\geq1$ (induction hypothesis) and we will prove that it continues to hold for $k+1$.
Since $x_{k}, x_{k+1}\in\mathcal{S}$ and from Lemma \ref{lemma3}, we have:
\[
    P_{k+1}-P_{k}\leq-\frac{\beta_{k+1}}{2}\|\Delta x_{k+1}\|^q\leq0.
\]
Together with the induction hypothesis, we obtain:
\[
    P_{k+1}\leq P_{k}{\overset{{}}{\leq}}\bar{P}.
\]
It remains to prove that $P_{k+1}\geq \ubar{P}$.  Using \eqref{lyapunov_function},  we have:
%\begin{subequations}
\begin{align*}
  P_{k}= f(x_{k})+\frac{\rho}{q}\|F(x_{k})\|_q^q\geq  f(x_{k})+\frac{\rho_0}{q}\|F(x_{k})\|_q^q{\overset{{(\text{Lemma } \ref{lem1})}}{\geq}}\ubar{P}. \label{bound}
\end{align*}
%\end{subequations}
It follows that the sequence $\{P_{k}\}_{k\geq0}$ is bounded from below.
Finally, \eqref{important} is  proved, which completes our proof.\qed
%\end{proof}

\medskip
\medskip

\medskip
\begin{comment}
\noindent \textbf{Proof of Lemma \ref{bounded_below}}
Using \eqref{lyapunov_function},  we have:
%\begin{subequations}
\begin{align*}
  P_{k}= f(x_{k})+\frac{\rho}{q}\|F(x_{k})\|_q^q\geq  f(x_{k})+\frac{\rho_0}{q}\|F(x_{k})\|_q^q{\overset{{(\text{Lemma } \ref{lem1})}}{\geq}}\ubar{P}. \label{bound}
\end{align*}
%\end{subequations}
It follows that the sequence $\{P_{k}\}_{k\geq0}$ is bounded from below. \qed
\end{comment}

\medskip
\noindent \textbf{Proof of Lemma \ref{bounded_gradient}}
%\begin{proof} 
Using the optimality condition, we have:
\[
    \nabla f(x_{k})+{J_F(x_{k})}^T(\rho \sign(l_F(x_{k+1};x_k))\circ|l_F(x_{k+1};x_k)|^{q-1})+\beta_{k+1}\Delta x_{k+1}=0.
\]
Exploiting  definition of $\mathcal{P}^q_{\rho}$ and  properties of the derivative, it follows that:
\begin{align*}
    & \nabla\mathcal{P}^q_{\rho}(x_{k+1})\\
    =&\nabla f(x_{k+1})+{J_F(x_{k+1})}^T\big(\rho \sign(F(x_{k+1}))\circ|F(x_{k+1})|^{q-1}\big)\\
     = & \nabla f(x_{k+1})-\nabla f(x_k)-\beta_{k+1}\Delta x_{k+1}\\
     &+\rho\big({J_F(x_{k+1})}-{J_F(x_{k})}\big)^T\sign(F(x_{k+1}))\circ |F(x_{k+1})|^{q-1}\\
     & \resizebox{\textwidth}{!}{$+\rho{J_F(x_{k})}^T\big(\sign(F(x_{k+1})) \circ|F(x_{k+1})|^{q-1}-\sign(l_F(x_{k+1};x_k))\circ|l_F(x_{k+1};x_k)|^{q-1}\big).$}
\end{align*}
It then follows by applying the norm:
\begin{align*}
     &\|\nabla\mathcal{P}^q_{\rho}(x_{k+1})\|\\
     \leq & \|\nabla f(x_{k+1})-\nabla f(x_k)\|+\beta_{k+1}\|\Delta x_{k+1}\|\\
     &+\rho\|\sign(F(x_{k+1}))\circ |F(x_{k+1})|^{q-1}\|\|{J_F(x_{k+1})}-{J_F(x_{k})}\|\\
   &  \resizebox{\textwidth}{!}{$+  \rho  \|J_F(x_{k})\|\|\sign(F(x_{k+1}))\circ |F(x_{k+1})|^{q-1}-\sign(l_F(x_{k+1};x_k))\circ|l_F(x_{k+1};x_k)|^{q-1}\|$}\\
     \overset{\eqref{holder_inequality0}}{\leq}&\|\nabla f(x_{k+1})-\nabla f(x_k)\|+\beta_{k+1}\|\Delta x_{k+1}\|\\
     &+\rho\|\sign(F(x_{k+1})) \circ|F(x_{k+1})|^{q-1}\|\|{J_F(x_{k+1})}-{J_F(x_{k})}\|\\
     & +\rho\|{J_F(x_{k})}\| \left(3\times m^{\frac{2-q}{2}}\left\|F(x_{k+1}) - l_F(x_{k+1};x_k)\right\|^{q-1}\right)\\
     & = \|\nabla f(x_{k+1})-\nabla f(x_k)\|+\beta_{k+1}\|\Delta x_{k+1}\|\\
      &+\rho \left(\sum_{i=1}^{m}\left(|f_i(x_{k+1})|^2\right)^{q-1}\right)^{\frac{1}{2}}\|{J_F(x_{k+1})}-{J_F(x_{k})}\|\\
     & +\rho\|{J_F(x_{k})}\| \left(3\times m^{\frac{2-q}{2}}\left\|F(x_{k+1}) - l_F(x_{k+1};x_k)\right\|^{q-1}\right)\\
     \leq& \|\nabla f(x_{k+1})-\nabla f(x_k)\|+\beta_{k+1}\|\Delta x_{k+1}\|\\
      &+\rho \left(\sum_{i=1}^{m}\left(|f_i(x_{k+1})|^2\right)\right)^{\frac{q-1}{2}}\times \left(\sum_{i=1}^{m} 1 \right)^{\frac{2-q}{2}}\|{J_F(x_{k+1})}-{J_F(x_{k})}\| \\
      & +\rho\|{J_F(x_{k})}\| \left(3\times m^{\frac{2-q}{2}}\left\|F(x_{k+1}) - l_F(x_{k+1};x_k)\right\|^{q-1}\right)\\
       =& \|\nabla f(x_{k+1})-\nabla f(x_k)\|+\beta_{k+1}\|\Delta x_{k+1}\|\\
      &+m^{\frac{2-q}{2}}\rho\|F(x_{k+1})\|^{q-1} \|{J_F(x_{k+1})}-{J_F(x_{k})}\|\\
     & +\rho\|{J_F(x_{k})}\| \left(3\times m^{\frac{2-q}{2}}\left\|F(x_{k+1}) - l_F(x_{k+1};x_k)\right\|^{q-1}\right),
     \end{align*}
     where the second inequality follows from Hölder's inequality. Moreover, using the fact that for any vector \( v \in \mathbb{R}^m \) and \( q \in (1, 2] \), we have:
\[
\|v\|_2 \leq \|v\|_q,
\]
      we further get:
     \begin{align*}
     &\|\nabla\mathcal{P}^q_{\rho}(x_{k+1})\|\\
           \leq& \|\nabla f(x_{k+1})-\nabla f(x_k)\|+\beta_{k+1}\|\Delta x_{k+1}\|\\
      &+m^{\frac{2-q}{2}}\rho\|F(x_{k+1})\|_q^{q-1} \|{J_F(x_{k+1})}-{J_F(x_{k})}\|\\
     & +\rho\|{J_F(x_{k})}\| \left(3\times m^{\frac{2-q}{2}}\left\|F(x_{k+1}) - l_F(x_{k+1};x_k)\right\|^{q-1}\right)\\
       {\overset{\text{Ass. } \ref{assump2}}{\leq}}& \resizebox{\textwidth}{!}{$\Big(L_f+m^{\frac{2-q}{2}} L_F \rho\|F(x_{k+1})\|_q^{q-1}+\beta_{k+1}\Big)\|\Delta x_{k+1}\|+3m^{\frac{2-q}{2}}\rho M_F \left(\frac{L_F}{2}\|\Delta x_{k+1}\|^2\right)^{q-1}$}\\
     = & \resizebox{\textwidth}{!}{$\left(L_f+ m^{\frac{2-q}{2}} L_F \rho\|F(x_{k+1})\|_q^{q-1}+\beta_{k+1}\right)\|\Delta x_{k+1}\|+ \frac{3m^{\frac{2-q}{2}}\rho M_F L_F^{q-1}}{2^{q-1}}\|\Delta x_{k+1}\|^{2(q-1)},$}
    \end{align*}
 Furthermore, we have:
\begin{align*}
    \frac{\rho}{q}\|F(x_{k+1})\|^q_q &= \mathcal{P}^q_{\rho}(x_{k+1}) - f(x_{k+1})\\
    & \leq \mathcal{P}^q_{\rho}(x_{k+1}) - \bar{f}.
\end{align*}
It then follows that:
\[
\rho\|F(x_{k+1})\|_q^{q-1} \leq q^{\frac{q-1}{q}} \rho^{\frac{1}{q}} \left(\mathcal{P}^q_{\rho}(x_{k+1}) - \bar{f}\right)^{\frac{q-1}{q}} \overset{\eqref{eq_descent}}{\leq} q^{\frac{q-1}{q}} \rho^{\frac{1}{q}} \left(\mathcal{P}^q_{\rho}(x_{k}) - \bar{f}\right)^{\frac{q-1}{q}}.
\]
Therefore, we get:
\begin{align*}
     \|\nabla\mathcal{P}_{\rho}(x_{k+1})\| & \leq \left(L_f+ q^{\frac{q-1}{q}}m^{\frac{2-q}{2}} L_F \rho^{\frac{1}{q}}\left(\mathcal{P}^q_{\rho}(x_{k}) - \bar{f}\right)^{\frac{q-1}{q}} +\beta_{k+1}\right)\|\Delta x_{k+1}\| \\
     & \quad+ \frac{3m^{\frac{2-q}{2}}\rho M_F L_F^{q-1}}{2^{q-1}}\|\Delta x_{k+1}\|^{2(q-1)}
\end{align*}
This completes the proof.\qed
%\end{proof}
\medskip

\medskip
\noindent \textbf{Proof of Lemma \ref{eq:lemma_2.5}}
Let us first assume  that $\beta_{k+1}r^2 \leq \Psi_r(x_k)$. Then, we have:
\begin{align*}
\min_{s\in\mathbb{R}^n} \left\{\bar{\mathcal{P}}^q_{\rho}(x_k+s;x_{k}) + \frac{\beta_{k+1}}{2} \|s\|^2 \right\} &\leq \min_{\|s\|\leq r} \left\{ \bar{\mathcal{P}}^q_{\rho}(x_k+s;x_{k}) + \frac{\beta_{k+1}}{2} \|s\|^2 \right\} \\
&\leq \min_{\|s\|\leq r} \left\{ \bar{\mathcal{P}}^q_{\rho}(x_k+s;x_{k})\right\} + \frac{\beta_{k+1}r^2}{2}  \\
&\leq \min_{\|s\|\leq r} \left\{ \bar{\mathcal{P}}^q_{\rho}(x_k+s;x_{k})\right\} + \frac{\Psi_r(x_k)}{2} ,
\end{align*}
and so, from \eqref{eq:Psi_r} and \eqref{eq:Psi}, it follows that:
\[
\bar{\Psi}(x_k, \beta_{k+1}) \geq \bar{\mathcal{P}}^q_{\rho}(x_k;x_{k}) - \min_{\|s\|\leq r} \bar{\mathcal{P}}^q_{\rho}(x_k+s;x_{k}) - \frac{\Psi_r(x_k)}{2} = \Psi_r(x_k) - \frac{\Psi_r(x_k)}{2} = \frac{\Psi_r(x_k)}{2},
\]
which proves \eqref{coraliaa} in the case when $\beta_{k+1}r^2 \leq \Psi_r(x_k)$.\\
Now let $\beta_{k+1}r^2 > \Psi_r(x_k)$ and $s_k^* \triangleq \arg\min_{\|s\|\leq r}  \bar{\mathcal{P}}^q_{\rho}(x_k+s;x_{k})$. Then, by defining $s_k\triangleq x_{k+1}-x_k$, we get:
\begin{align*}
\bar{\mathcal{P}}^q_{\rho}(x_k+s_k;x_{k}) + \frac{\beta_{k+1}}{2} \|s_k\|^2 
&\leq \bar{\mathcal{P}}^q_{\rho}\left(x_k+\frac{\Psi_r(x_k)}{\beta_{k+1}r^2}s_k^*;x_{k}\right) +\frac{\beta_{k+1}}{2}\left\|\frac{\Psi_r(x_k)}{\beta_{k+1}r^2}s_k^*\right\|^2 \\
&\leq \bar{\mathcal{P}}^q_{\rho}\left(x_k+\frac{\Psi_r(x_k)}{\beta_{k+1}r^2}s_k^*;x_{k}\right) + \frac{\Psi_r(x_k)^2}{2\beta_{k+1}r^2},
\end{align*}
where, to obtain the second inequality, we used $\|s_k^*\| \leq r$. This and \eqref{eq:Psi_r} give
\begin{equation}
\label{eq:lemma_proof_step}
\bar{\Psi}(x_k, \beta_{k+1}) \geq  \bar{\mathcal{P}}^q_{\rho}\left(x_k;x_{k}\right) -  \bar{\mathcal{P}}^q_{\rho}\left(x_k+\frac{\Psi_r(x_k)}{\beta_{k+1}r^2}s_k^*;x_{k}\right)-  \frac{\Psi_r(x_k)^2}{2\beta_{k+1}r^2}.
\end{equation}
Using $0 < \frac{\Psi_r(x_k)}{\beta_{k+1}r^2} < 1$ and the fact that $\bar{\mathcal{P}}^q_{\rho}$  is convex, we obtain:
\[
\bar{\mathcal{P}}^q_{\rho}\left(x_k+\frac{\Psi_r(x_k)}{\beta_{k+1}r^2}s_k^*;x_{k}\right) \leq \left(1 - \frac{\Psi_r(x_k)}{\beta_{k+1}r^2}\right) \bar{\mathcal{P}}^q_{\rho}\left(x_k;x_{k}\right)  + \frac{\Psi_r(x_k)}{\beta_{k+1}r^2} \bar{\mathcal{P}}^q_{\rho}\left(x_k+s_k^*;x_{k}\right) ,
\]
which substituted into \eqref{eq:lemma_proof_step} gives
\begin{align*}
    \bar{\Psi}(x_k, \beta_{k+1}) &\geq \frac{\Psi_r(x_k)}{\beta_{k+1}r^2} \left(\bar{\mathcal{P}}^q_{\rho}\left(x_k;x_{k}\right) - \bar{\mathcal{P}}^q_{\rho}\left(x_k+s_k^*;x_{k}\right)\right) - \frac{\Psi_r(x_k)^2}{2\beta_{k+1}r^2} \\
    &= \frac{\Psi_r(x_k)^2}{\beta_{k+1}r^2} - \frac{\Psi_r(x_k)^2}{2\beta_{k+1}r^2} = \frac{\Psi_r(x_k)^2}{2\beta_{k+1}r^2},
\end{align*}
where we also used \eqref{eq:Psi} and the choice of $s_k^*$. This concludes our proof.\qed

\medskip

\noindent \textbf{Proof of Lemma \ref{lemma_criticality}}
It suffices to prove that given any $\epsilon \in (0, 1]$, the total number of iterations of Algorithm \ref{alg1} with $\Psi_{\epsilon}(x_k) > \epsilon^2$ is at most
\[
K \leq \left\lceil2\bar{\beta}\left(\bar{P}-\ubar{P}\right) \epsilon^{-2} \right\rceil \leq \mathcal{O}\left(\frac{\rho^{\frac{1}{q}} }{\epsilon^2}\right).
\]
Using Lemma \ref{eq:lemma_2.5} with the fact that  $\beta_k\leq \bar{\beta}$ for any $k\geq 1$, it follows that:
\[
\bar{\Psi}(x_k, \beta_{k+1}) \geq \frac{1}{2} \min\left(1, \frac{\Psi_{\epsilon}(x_k)}{\bar{\beta}\epsilon^2}\right)\Psi_{\epsilon}(x_k), \quad \text{for } k \geq 0.
          \]
Thus, while Algorithm \ref{alg1} does not terminate, $\Psi_{\epsilon}(x_k) > \epsilon^2$ and $\epsilon \leq 1$  provide
\[
\bar{\Psi}(x_k, \beta_{k+1}) \geq \frac{1}{2} \min\left(1, \frac{\epsilon^2}{\bar{\beta}\epsilon^2}\right) \epsilon^2 = \frac{\epsilon^2}{2\bar{\beta}},
\] 
where the equality follows from the fact that $\bar{\beta} \geq 1$. Combining the above inequality with \eqref{succesfull}, we get
\[
\mathcal{P}^q_{\rho}(x_k) - \mathcal{P}^q_{\rho}(x_{k+1}) \geq  \bar{\Psi}(x_k, \beta_{k+1}) \geq \frac{ \epsilon^2}{2\bar{\beta}}.
\]
Let $K>0$. Summing up the above inequality over $k$, we get
\[
\bar{P}-\ubar{P}\geq \sum_{k=0}^{K} [\mathcal{P}^q_{\rho}(x_k) - \mathcal{P}^q_{\rho}(x_{k+1})] \geq K \frac{\epsilon^2}{2\bar{\beta}},
\]
and so $K \leq 2\frac{\bar{\beta} (\bar{P}-\ubar{P})}{\epsilon^2}\leq \mathcal{O}\left(\frac{\rho^{\frac{1}{q}} }{\epsilon^2}\right)$, which proves our claim. \qed

\bigskip

%%%%%%%%%%%


\begin{thebibliography}{99}
\setlength{\baselineskip}{.45cm}
\bibitem{Ber:76}{ D.P. Bertsekas \emph{On penalty and multiplier methods for constrained minimization}, SIAM Journal on Control and  Optimization, 14:  216–235, 1976.}

%\bibitem{Ber:96}{ D. P. Bertsekas \emph{Constrained Optimization and Lagrange Multiplier Methods}, Athena Scientific, Belmont, MA 1996.}

%\bibitem{Ber:15}{ D. P. Bertsekas \emph{Convex Optimization Algorithms}, Athena Scientific, Belmont, MA 2015.}

%\bibitem{BerTsi:03}{ D. P. Bertsekas, and J. N. Tsitsiklis, \emph{Parallel and Distributed Computation: Numerical Methods}, Athena Scientific, 2003.}

\bibitem{BirMar:20}{ E.G. Birgin and J.M. Martínez, \emph{Complexity and performance of an augmented Lagrangian algorithm}, Optimization Methods and Software, 35(5): 885-920, 2020.}

%\bibitem{ByrGou:05}{  R. H. Byrd, N. I. M. Gould, J. Nocedal, and R. A. Waltz, \emph{On the convergence of successive linear-quadratic programming algorithms}, SIAM J. Optim., 16, pp. 471–489, 2005.}

%\bibitem{BotNgu:20}{ R. I. Bot and D. K. Nguyen, \emph{The proximal alternating direction method of multipliers in the nonconvex setting: convergence analysis and rates}, Math. Op. Res. 45(2), 682–712, 2020.}

%\bibitem{BoyPar:11}{ S. Boyd,  N. Parikh, E. Chu, B. Peleato, and J. Eckstein,  \emph{Distributed optimization and statistical learning via the alternating direction method of multipliers}, Found. Trends Mach. Learn. 3(1), 1–122, 2011.}

\bibitem{CarGou:11}{ C. Cartis, N.  Gould and P. Toint, \emph{ On the evaluation complexity of composite function minimization with applications to nonconvex nonlinear programming}, SIAM Journal on Optimization, 21: 1721-1739, 2011.} 

\bibitem{CohHal:21}{ E. Cohen,  N. Hallak and M. Teboulle, \emph{A dynamic alternating direction of multipliers for nonconvex minimization with nonlinear functional equality constraints}, Journal of Optimization Theory and Applications, 193: 324–353, 2022.}

%\bibitem{ConGou:00}{ A. R. Conn,  N. I. M. Gould and Ph. L. Toint, \emph{Trust-Region Methods}, SIAM, Philadelphia, 2000.}

\bibitem{Cou:43}{R. Courant, \emph{Variational methods for the solution of problems of equilibrium and vibration}, Bulletin of the American Mathematical Society,  49, 1–23, 1943.}

\bibitem{DevGli:13}{O. Devolder, F. Glineur and Y. Nesterov, \emph{First-order methods with inexact oracle: the strongly convex case}, CORE Discussion Paper,  2013.}

\bibitem{ElbNec:25}{L. E. Bourkhissi and I. Necoara,  \emph{Complexity of linearized quadratic penalty for optimization with nonlinear equality constraints}, Journal of Global Optimization, 91: 483–510, 2025.}

%\bibitem{Fan:97}{ J. Fan, \emph{Comments on “wavelets in statistics: a review}, by A. Antoniadis. J. Ital. Stat. Soc. 6, 131–138, 1997.}

%\bibitem{Fes:20}{ J.A. Fessler, \emph{Optimization methods for magnetic resonance image reconstruction}, key models and optimization algorithms. IEEE Signal Processing Magazine, 37(1): 33–40, 2020.}

%\bibitem{FiaMcC:68}{ A. V. Fiacco and G. McCormick, \emph{Nonlinear Programming: Sequential Unconstrained Minimization Techniques}, Wiley, New York, 1968.}

\bibitem{Fle:87}{ R. Fletcher, \emph{Practical Methods of Optimization}, 2nd edition, Wiley, 1987.}

%\bibitem{GloTal:89}{ R. Glowinski, and P. Le Tallec, \emph{Augmented Lagrangian and Operator-Splitting Methods in Nonlinear Mechanics}, SIAM, vol. 9, 1989.}

%\bibitem{GraBoy:14}{ M. Grant and S. Boyd, \emph{CVX: Matlab Software for Disciplined Convex Programming}, version 2.1, 2014. [Online]. Available:  \url{http://cvxr.com/cvx, Mar. 2014}}

%\bibitem{HaaBuy:70}{ P. C. Haarhoff, J. D. Buys, \emph{A new method for the optimization of a nonlinear function subject to nonlinear constraints}, Computer Journal, 13, 178–184, 1970.}

%\bibitem{HajHon:19}{ D. Hajinezhad and M. Hong, \emph{Perturbed proximal primal-dual algorithm for nonconvex nonsmooth optimization}, Math. Program. 2019.}

%\bibitem{Hes:69}{ M. Hestenes, \emph{Multiplier and gradient methods},  Journal of Optimization Theory and Applications, 4, 303–320, 1969.}

\bibitem{HonHaj:17}{ M. Hong,  D. Hajinezhad, and  M. M. Zhao, \emph{ Prox-PDA: The proximal primal-dual algorithm for fast distributed nonconvex optimization and learning over networks}, Proceedings of International Conference on Machine Learning, 70: 1529–1538,  2017.}

\bibitem{IzmSol:23}{A. F. Izmailov and  M. V.  Solodov, \emph{Convergence rate estimates for penalty methods revisited}, Computational Optimization and Applications, 85: 973–992, 2023.}

%\bibitem{JiaLin:19}{ B. Jiang, T. Lin, S. Ma and S. Zhang, \emph{Structured nonconvex and nonsmooth optimization: algorithms and iteration complexity analysis}, Comput. Optim. Appl. 72(1), 115–157, 2019.}

\bibitem{KonMel:18}{W. Kong, J. Melo  and R. Monteiro, \emph{Complexity of a quadratic penalty accelerated inexact proximal point method for solving linearly constrained nonconvex composite programs}, SIAM Journal on Optimization, 29(4): 2566-2593, 2019.}

\bibitem{LinMa:22}{Q. Lin, R. Ma and Y. Xu, \emph{Complexity of an inexact proximal-point penalty method for constrained smooth non-convex optimization}, Computational  Optimization and  Applications, 82: 175–224, 2022}.

\bibitem{LukSab:19}{ D.R. Luke, S. Sabach and M. Teboulle, \emph{Optimization on spheres: models and proximal algorithms with computational performance comparisons}, SIAM Journal on Mathematics of Data Science, 1(3): 408–445, 2019.}

\bibitem{MarOku:24} N. Marumo, T. Okuno and A. Takeda, \emph{Accelerated-gradient-based generalized Levenberg–Marquardt method with oracle complexity bound and local quadratic convergence}, Mathematical Programming, doi:10.1007/s10107-024-02154-4, 2024.

%\bibitem{MesBau:21}{ F. Messerer, K. Baumgärtner and  M. Diehl,  \emph{Survey of sequential convex programming and generalized Gauss-Newton methods}, ESAIM: Proceedings and Surveys, 2021.}

%\bibitem{MurSau:82}{ B.A. Murtagh and M.A. Saunders, \emph{A projected Lagrangian algorithm and its implementation for sparse nonlinear constraints}, Mathematical Programming, 16, 84–117, 1982.}

\bibitem{NabNec:25}  
{Y. Nabou and I. Necoara, \emph{Regularized higher-order Taylor approximation methods for nonlinear least-squares}, arXiv preprint arXiv:2503.02370, 2025.}

%\bibitem{NecKva:15}{ I. Necoara and S. Kvamme, \emph{DuQuad: A toolbox for solving convex quadratic programs using dual (augmented) first order algorithms}, 2015 54th IEEE Conference on Decision and Control (CDC), pp. 2043-2048, 2015.}

\bibitem{NocWri:06}{J. Nocedal and S.J. Wright, \emph{ Numerical Optimization}, Springer,  2006.}

%\bibitem{Pol:87}{ B. T. Polyak, \emph{ Introduction to Optimization}, Optimization Software Inc., Publications Division, New York, 1987.}

\bibitem{PolTre:73}{ B.T. Polyak and N. V.Tretyakov, \emph{The method of penalty estimates for conditional extremum problems}, USSR Computational Mathematics and Mathematical Physics, 13: 42–58, 1973.}

%\bibitem{Pow:69}{ M. J. D. Powell, \emph{A method for nonlinear optimization in minimization problems}, in Optimization (R. Fletcher, ed.), Academic Press, 283–298, 1969.}

%\bibitem{RocWet:98}{ R. Rockafellar and R. Wets, \emph{Variational Analysis}, Springer, Berlin, 1998.}

%\bibitem{Roy:19}{ J.O. Royset, \emph{Variational Analysis in Modern Statistics}, Special Issue in Mathematical Programming,  174, 2019.}

%\bibitem{RoyOne:19}{ C. W. Royer, M. O’Neill and S. J.Wright,  \emph{A Newton-CG algorithm with complexity guarantees for smooth unconstrained optimization}, Mathematical Programming, 2019.}

%\bibitem{SheTeb:14}{ R. Shefi and M. Teboulle, \emph{Rate of convergence analysis of decompositionmethods based on the proximal method of multipliers for convex minimization}, SIAM J. Opt. 24(1), 269–297, 2014.}

%\bibitem{TraDie:10}{ Q. Tran-Dinh and M. Diehl,  \emph{Local convergence of sequential convex programming for nonconvex optimization}. In M. Diehl, F. Glineur, E. Jarlebring, and W. Michiels, editors, \emph{Recent advances in optimization and its application in engineering}, pages 93–103. Springer-Verl., 2010.}

%\bibitem{WacBie:06}{ A. Wächter and L. T. Biegler, \emph{On the Implementation of a Primal-Dual Interior Point Filter Line Search Algorithm for Large-Scale Nonlinear Programming}, Mathematical Programming 106(1),  25-57, 2006.} 

\bibitem{XieWri:21}{ Y. Xie, S.J. Wright,  \emph{Complexity of proximal augmented Lagrangian for nonconvex optimization with nonlinear equality constraints},  Journal of Scientific Computing, 86,  2021.}

%\bibitem{Yas:22}{ M. Yashtini, \emph{Convergence and rate analysis of a proximal linearized ADMM for nonconvex nonsmooth optimization}, Journal of Global Optimization, 2022.}

\bibitem{Yua:85}{Y. Yuan, \emph{Conditions for convergence of trust region algorithms for non-smooth optimization}, Mathematical Programming, 31: 220–228, 1985.}

%\bibitem{Zha:10}{ C. Zhang, \emph{Nearly unbiased variable selection under minimax concave penalty}, Ann. Stat. 38, 894–942, 2010.}


\end{thebibliography}
\end{document}